 \newtheorem{thm}{Theorem}[section]
\newtheorem{lem}{Lemma}[section]
 \newtheorem{defn}{Definition}[section]
 \newtheorem{coro}{Corollary}[section]
 \newtheorem{open problem}{Open problem}[section]
 \newtheorem{prop}{Proposition}[section]
 \newtheorem{conj}{Conjecture}[section]
 \newcommand{\qed}{\nopagebreak\hspace*{\fill}$\square$\par\vskip2mm}
 \newenvironment{proof}{\trivlist
      \item[\hskip\labelsep
      {\itshape Proof.}]\normalfont}
      {\hspace*{\fill}$\Box$\endtrivlist}
\begin{document}

\title{ Purely singular splittings of  cyclic groups}


\author{Kevin Zhao \thanks{K. Zhao is with School of  of Mathematical Science, South China Normal University,  Guangzhou 510631, China (email:zhkw-hebei@163.com)},
Pingzhi Yuan\thanks{Corresponding author. P. Yuan is with School of  of Mathematical Science, South China Normal University,  Guangzhou 510631, China (email: yuanpz@scnu.edu.cn).} }

\date{}
\maketitle
 \edef \tmp {\the \catcode`@}
   \catcode`@=11
   \def \@thefnmark {}

    \@footnotetext { Supported by  NSF of China  (Grant No. 11671153) and NSF of Guangdong (No. 2016A030313850).}

    \begin{abstract} Let $G$ be a finite abelian group.
  We  say that $M$ and $S$ form a \textsl{splitting} of $G$ if every nonzero element $g$ of $G$ has a unique representation of the form $g=ms$ with $m\in M$ and $s\in S$, while $0$ has no such representation.
The splitting is called \textit{purely singular}
if for each prime divisor $p$ of $|G|$, there is at least one element of $M$ is divisible by $p$.

  In this paper, we mainly study the purely singular splittings of cyclic groups.  We first prove that if $k\ge3$ is a positive integer such that $[-k+1, \,k]^*$ splits a cyclic group $\mathbb{Z}_m$, then $m=2k$. Next, we have the following general result. Suppose  $M=[-k_1, \,k_2]^*$ splits $\mathbb{Z}_{n(k_1+k_2)+1}$ with $1\leq k_1< k_2$.
If $n\geq 2$, then $k_1\leq n-2$ and $k_2\leq 2n-5$. Applying this result, we prove that if $M=[-k_1, \,k_2]^*$ splits $\mathbb{Z}_m$ purely singularly, and  either $(i)$ $\gcd(s, \,m)=1$ for all $s\in S$ or $(ii)$ $m=2^{\alpha}p^{\beta}$ or $2^{\alpha}p_1p_2$
with $\alpha\geq 0$, $\beta\geq 1$ and $p$, $p_1$, $p_2$ odd primes,
then $m=k_1+k_2+1$ or $k_1=0$ and $m=k_2+1$ or $2k_2+1$.

\end{abstract}

{\bf Keywords:}
splitter sets, perfect codes, factorizations of cyclic groups.

\section{Introduction}

Let $G$ be a finite group, written additively, $M$ a set of integers, and $S$ a subset of $G$.
We  say that $M$ and $S$ form a \textsl{splitting} of $G$ if every nonzero element $g$ of $G$ has a unique representation of the form $g=ms$ with $m\in M$ and $s\in S$, while $0$ has no such representation.
(Here''$ms$" denotes the sum of $m$ $s$'s if $m\geq 0$, and $-((-m)s)$ if $m<0$.)
We  write ''$G\setminus \{0\}=MS$" to indicate that $M$ and $S$ form a splitting of $G$.
$M$  is referred to as the multiplier set and $S$ as the splitter set.
We  also say that $M$ splits $G$ with  a splitter set $S$, or simply that $M$ splits $G$.

 Let $a, b$ be integers such that $a\le b$, denote
$$[a, b] = \{a, a + 1, a + 2, \ldots , b\}\,\, \mbox{and}\,\,\,
[a, b]^\ast = \{a, a + 1, a + 2, \ldots, b\}\backslash\{0\}.$$
For any positive integer $q$, let $\mathbb{Z}_q$ be the ring of integers
modulo $q$ and $\mathbb{Z}_q ^\ast= \mathbb{Z}_q \backslash \{0\}$. For $a\in\mathbb{Z}_q ^\ast$, $o(a)$ denotes the order of $a$ in the multiplicative group $\mathbb{Z}_q ^\ast$.

Let $q$ be a positive integer and $k_1, k_2$ be non-negative
integers with $0\le  k_1\le  k_2$. The set $B\subset\mathbb{Z}_q$ of size $n$ is
called a {\it splitter} {\it set} (or a {\it packing} {\it set}) if all the sets
$$\{ab\pmod{q}: a\in [-k_1, k_2]\},\quad  b\in B $$
have $k_1+k_2$ nonzero elements, and they are disjoint. We denote
such a splitter set by $B[-k_1, k_2](q)$ set.
A $B[-k_1, k_2](q)$ set is called {\it perfect} if $n = \frac{q- 1}{k_1 + k_2}$. Clearly,
a perfect set can exist only if $q\equiv1\pmod{ k_1 + k_2}$.   A perfect $B[-k_1, k_2](q)$ set
is called {\it nonsingular} if $\gcd(q, k_2!) = 1$. Otherwise, the set is called
{\it singular}. If for any prime $p|q$, there is some $k$ with $0 < k \le k_2$
such that $p|k$, then the perfect $B[-k_1, k_2](q)$ set is called {\it
purely} {\it singular}.

{\bf Remark:}  Let $q$ be a positive integer and $k_1, k_2$ be non-negative
integers with $0\le  k_1\le  k_2$. Let $M=[-k_1, k_2]^\ast$. Then $B$ is a perfect $B[-k_1, k_2](q)$ set if and only if $MB$ is a splitting of $\mathbb{Z}_q$ by the works of D. Hickerson \cite{[H]} and Schwarz \cite{[SC]}. Therefore, we are only interested in considering  purely singular perfect $B[-k_1, k_2](q)$ sets for the cyclic group $\mathbb{Z}_q$ and nonsingular  perfect $B[-k_1, k_2](p)$ sets for an odd prime $p$.

In this paper, we focus our attention to the purely singular perfect $B[-k_1, \,k_2](q)$ sets for the cyclic group $\mathbb{Z}_q$. Zhang and Ge \cite{[ZG]} proposed the following conjecture.

\begin{conj} \label{conj}
Let $k_1, \,k_2$ be integers with $1 \le k_1 < k_2$ and
$k_1 + k_2\ge 4$, then there does not exist any purely singular
perfect $B[-k_1, \,k_2](m)$ set except for $m = 1$ and except possibly for $m = k_1 + k_2 + 1$.
\end{conj}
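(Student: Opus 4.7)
My plan is to show that for $1\le k_1<k_2$ with $k_1+k_2\ge 4$, no purely singular perfect $B[-k_1,k_2](m)$ set exists unless $m=1$ or (possibly) $m=k_1+k_2+1$. Since perfectness forces $m\equiv 1\pmod{k_1+k_2}$, writing $m=n(k_1+k_2)+1$ reduces the task to ruling out every $n\ge 2$.

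The attack combines two ingredients. The first is the shape bound established earlier in the paper: any splitting of $\mathbb{Z}_{n(k_1+k_2)+1}$ by $M=[-k_1,k_2]^{\ast}$ with $n\ge 2$ and $k_1<k_2$ satisfies $k_1\le n-2$ and $k_2\le 2n-5$, so in particular $n\ge 4$. The second is the purely singular hypothesis itself: every prime $p\mid m$ must divide some element of $\{1,\dots,k_2\}$, whence $p\le k_2\le 2n-5$. Combining these, $m=n(k_1+k_2)+1$ is forced to be a $(2n-5)$-smooth integer while also being of size roughly $n\cdot(k_1+k_2)$ with $k_1+k_2\le 3n-7$. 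This is a very tight pair of constraints that I would turn into a contradiction by arithmetic bookkeeping for small $n$ and by a growth argument for $n$ large.

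Next, for each prime $p\mid m$ I would exploit the fact that the image of $M$ in $\mathbb{Z}_p$ collapses (since $p\le k_2$), which rigidifies the induced structure of $B$ modulo $p^{v_p(m)}$. The workhorse identity is that for every nontrivial additive character $\chi$ of $\mathbb{Z}_m$,
\[
\sum_{b\in B}\sum_{k\in M}\chi(kb)=-1,
\]
and specializing $\chi$ to characters of $p$-power order links $B\bmod p^{v_p(m)}$ to the residue pattern of $M\bmod p$. In case $(i)$, where every splitter is a unit, multiplication by any $s\in S$ is a permutation of $\mathbb{Z}_m^{\ast}$, which propagates the local rigidity at one prime to a global statement; I would use this to force $B$ into a shape incompatible with the shape bound. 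In case $(ii)$, where $m=2^{\alpha}p^{\beta}$ or $m=2^{\alpha}p_1p_2$, the very short list of possible prime factorizations lets me enumerate candidates and defeat each one by direct calculation using the character identity.

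The step I expect to be the main obstacle — and presumably the reason Zhang--Ge stated the general claim only as a conjecture — is the generic situation where $m$ has several odd prime factors and no unit hypothesis is available: the simple local-to-global bootstrapping that drives cases $(i)$ and $(ii)$ breaks down, and one must simultaneously control $B$ modulo each prime-power divisor. A successful argument here would likely need a new global structural input on $B$, perhaps of polynomial-method or Fourier-analytic flavor, that exploits how the reduction of $M$ modulo each small $p\mid m$ is forced to collide with multiplicity $m/p$, sharpened by the stringent bound $k_2\le 2n-5$ coming from the shape lemma.
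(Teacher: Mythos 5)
There is a genuine gap, and it is one you partly acknowledge yourself: the statement you were asked to prove is Conjecture 1.1 (due to Zhang and Ge), which this paper does \emph{not} prove in full generality — it establishes it only under extra hypotheses, namely when every splitter is a unit modulo $m$ (Theorem \ref{primeB}), when $m=2^{\alpha}p^{\beta}$ or $2^{\alpha}p_1p_2$ (Theorem \ref{finally-thm}), and when $k_1+k_2\le 14$ (Theorem 1.6). Your proposal likewise does not close the general case: you explicitly defer the situation where $m$ has several odd prime factors and no unit hypothesis is available to ``a new global structural input,'' so what you have written is a programme, not a proof of the conjecture.

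Beyond the admitted gap, the one place where you claim an actual contradiction is unsound. You argue that the shape bounds $k_1\le n-2$, $k_2\le 2n-5$ (Theorem \ref{thm1}) together with the purely singular constraint ($p\mid m\Rightarrow p\le k_2$) make $m=n(k_1+k_2)+1$ a $(2n-5)$-smooth number of size roughly $3n^2$, and that ``arithmetic bookkeeping for small $n$'' plus ``a growth argument for $n$ large'' yields a contradiction. These two constraints are not in numerical conflict: integers of size about $3n^2$ all of whose prime factors are at most $2n-5$ are abundant (for instance products of two primes near $n$), so no growth argument can rule them out; this is precisely why the conjecture is open rather than an exercise. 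For the two cases you do sketch, your character-sum/local-rigidity outline is also not what the paper does and is not developed far enough to check: the paper's proof of case $(i)$ is a counting argument over divisors $d\mid m$ using $|M_d|=[k_1/d]+[k_2/d]$ together with Lemmas \ref{cross}, \ref{semi-cross} and Proposition \ref{thmin1}, and case $(ii)$ is handled by the reduction Lemma \ref{gen-pk-lem}, Lemma \ref{spec-cyclic}, and Szab\'o's theorem on splittings of $p$-groups (Lemma \ref{p-group-sp}), not by Fourier analysis. As it stands, your proposal neither reproduces those proofs nor supplies an alternative argument that establishes the conjectured statement.
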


Zhang and Ge \cite{[ZG]} proved that Conjecture  \ref{conj} holds for $[-1, \,k]^\ast$ when $k=3, \,4, \,5$, $6, \,8$, $9, \,10$ and $[-2, \,k]^\ast$ when $k=3, \,4, \,6$. The authors \cite{[PK]} obtain some results on the purely singular
perfect $B[-k_1, \,k_2](q)$ sets and showed that Conjecture  \ref{conj} holds
 for $q=2^n$.

For the case when $k_1=0$, we have the following conjecture of Woldar \cite{[W]}.

\begin{conj} \label{conj1} Let $k$ be a positive integer. If $[1, \, k]$ splits the finite abelian group $G$ purely singularly, then $G$
is one of $\mathbb{Z}_1$, $\mathbb{Z}_{k + 1}$, or $\mathbb{Z}_{2k + 1}$.\end{conj}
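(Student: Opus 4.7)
The plan is to reduce Woldar's conjecture to the cyclic case and then pin down the possible orders using the counting identity $|G|-1 = k|S|$ together with an extension of the paper's main theorem to the degenerate case $k_1 = 0$. For the cyclicity step, I would assume $G$ has $p$-rank $r \ge 2$ for some prime $p$; pure singularity of the splitting by $M = [1,k]$ forces $p \le k$, so $p \in M$. The $p$-torsion subgroup $G[p]$ then contains at least $p^2 - 1$ nonzero elements, all annihilated by the multiplier $p \in M$; tracing through the unique representations $g = ms$ for $g \in G[p] \setminus \{0\}$ produces a collision among the packings $\{s, 2s, \ldots, ks\}$, contradicting the splitting property. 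Iterating across primes reduces $G$ to $\mathbb{Z}_m$ with every prime divisor of $m$ at most $k$.

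In the cyclic case, writing $m = 1 + nk$ with $n = |S|$, the values $n \in \{0,1,2\}$ produce exactly $\mathbb{Z}_1$, $\mathbb{Z}_{k+1}$ (take $S = \{1\}$), and $\mathbb{Z}_{2k+1}$ (take $S = \{\pm 1\}$), the three listed groups. To eliminate $n \ge 3$, I would extend the paper's inequality $k_2 \le 2n - 5$ (proved for $[-k_1, k_2]^{*}$ with $1 \le k_1 < k_2$) to cover $k_1 = 0$, by rerunning the same argument with an empty negative tail of $M$; at worst the constant shifts by one or two. The resulting bound $k \le 2n - c$ disposes of $n = 3$ immediately (it forces $k$ too small to be compatible with $k \ge 2$) and confines $(n,k)$ to a polynomial window for larger $n$. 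In that window, purely singularity (every prime $p \mid m$ lies in $[2,k]$) combined with the induced splitting on each $p$-primary component of $\mathbb{Z}_m$ yields divisibility constraints among $n$, $k$, $p$, and $\alpha$, where $p^\alpha \| m$. A downward induction on the number of distinct prime factors of $m$, using the induced splittings of $\mathbb{Z}_{p^\alpha}$, would then close the argument.

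The main obstacle is precisely this final step: converting the polynomial window left by $k \le 2n - c$ into a genuine impossibility for every $n \ge 3$. The paper sidesteps this difficulty by imposing either (i) $\gcd(s,m) = 1$ for all $s \in S$ or (ii) a restricted prime factorisation of $m$, and a fully unconditional proof appears to require either a sharper linear bound on $k$ in terms of $n$, or new input --- for example, character sum estimates or additive-combinatorial control on the joint distribution of $S$ modulo each prime power dividing $m$. I would expect the cyclic reduction and the extension of the main theorem to $k_1 = 0$ to be essentially routine, with the real difficulty concentrated in the unconditional elimination of splitter sets containing elements that share a factor with $m$.
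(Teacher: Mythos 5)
There is no ``paper's own proof'' to compare against: the statement you were given is Conjecture~\ref{conj1} (Woldar's conjecture), which the paper leaves open. The paper only proves it under extra hypotheses --- Theorem~\ref{primeB} assumes $\gcd(s,m)=1$ for every $s\in S$, Theorem~\ref{finally-thm} assumes $m=2^{\alpha}p^{\beta}$ or $2^{\alpha}p_1p_2$, and the last theorem covers $k_1+k_2\le 14$ --- and for $[1,k]$ itself the only unconditional evidence cited is Hickerson's verification for $k<3000$. Your proposal does not close this gap, and you in effect concede as much: the ``downward induction on the number of distinct prime factors of $m$'' that is supposed to finish the argument is precisely the open problem, namely ruling out splitter sets containing elements not coprime to $m$ when $m$ has several prime divisors. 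So what you have written is a program, not a proof.

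Two more concrete points. First, the step you expect to be routine, extending $k_2\le 2n-5$ to $k_1=0$, is already in the paper in a stronger form: Lemma~\ref{semi-cross} (Stein--Szab\'o) gives $k\le n-2$ whenever $[1,k]$ splits $\mathbb{Z}_{kn+1}$ with $n\ge 3$. But such linear bounds in the opposite direction ($k$ small relative to $n=|S|$) only constrain the parameters; they do not eliminate any $n\ge 3$, which is why the paper must instead impose condition (i) or (ii) to force $|S|\le 2$ via the counting argument in the proof of Theorem~\ref{primeB} (your claim that the bound ``disposes of $n=3$ immediately'' is backwards: it merely forces $k\le 1$ there, and for general $n$ it leaves infinitely many admissible pairs). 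Second, the cyclicity reduction is only gestured at: asserting that the $p^2-1$ nonzero elements of $G[p]$ ``produce a collision among the packings $\{s,2s,\ldots,ks\}$'' is not an argument, since elements of $G[p]$ need not be of the form $ps'$ for $s'\in S$, and multiplying a representation $g=ms$ by $p$ gives $0$, which by definition has no representation, so no immediate contradiction falls out. Unless you can supply both a genuine rank reduction and a new mechanism for handling $s\in S$ with $\gcd(s,m)>1$, the core of the conjecture remains untouched.
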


Conjecture \ref{conj1} has been verified by Hickerson \cite{[W]} for all $k < 3000$. In this paper, by using some technique of the paper \cite{[PK]}, we first prove that  Conjecture  \ref{conj} holds for $[-k+1, \,k]^\ast$ when $k\ge3$. We have

\begin{thm} \label{minus=1-splitting}
Let $G$ be a finite cyclic group and $k\ge3$ a positive integer. Then $[-k+1, \,k]^*$ splits $G$ if and only if $G$ is a cyclic group of order $2k$.
\end{thm}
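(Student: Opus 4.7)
My plan is to verify the ``if'' direction by direct computation and attack ``only if'' by combining the almost-symmetric structure of $M=[-k+1,k]^*$ with sum-of-elements and character-sum techniques.

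For sufficiency, in $\mathbb{Z}_{2k}$ the set $M\cdot 1$ reduces modulo $2k$ to $\{k+1,\ldots,2k-1,1,2,\ldots,k\}=\mathbb{Z}_{2k}\setminus\{0\}$, so $S=\{1\}$ is a splitter set.

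For necessity, suppose $M$ splits $\mathbb{Z}_m$ with splitter set $S$ of size $n$, so $m=n(2k-1)+1$; after replacing $S$ by a suitable unit-multiple I may assume $1\in S$. The central structural observation is that $M$ is almost symmetric: $-M$ differs from $M$ only in containing $-k$ in place of $k$. Hence each block $M\cdot s$ in the partition of $\mathbb{Z}_m\setminus\{0\}$ is symmetric except at the single element $ks$, and for every $s\in S$ with $o(s)>2k$ the element $-ks$ must lie in some other block $M\cdot\phi(s)$. A short check using the almost-symmetric shape of $M$ pins down the multiplier in that representation as $k$, so $\phi(s)\equiv -s\pmod{m/\gcd(m,k)}$ and $\phi$ is an involution of $S$. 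The constraint $o(s)\ge 2k$ then rules out fixed points of $\phi$ unless $2k\mid m$.

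Combined with $\sum_{a\in M}a=k$, the splitting identity $\sum_{g\in\mathbb{Z}_m\setminus\{0\}}g=k\sum_{s\in S}s$ equals $m(m-1)/2$; when $\phi$ is fixed-point-free, pairing $s$ with $\phi(s)$ forces $\sum_{s\in S}ks\equiv 0\pmod m$, so $m\mid m(m-1)/2$, impossible for $m$ even. This disposes of $m$ even with $2k\nmid m$. For the complementary case $\gcd(m,2k)>1$ I would bring in character sums: the splitting yields $\sum_{s\in S}\phi_M(\zeta^s)=-1$ at every non-trivial $m$-th root $\zeta$, where $\phi_M(z)=\sum_{a\in M}z^a$, and the identity $z^{k-1}\phi_M(z)=(1+z+\cdots+z^{2k-1})-z^{k-1}$ shows $\phi_M(\zeta)=-1$ at every non-trivial $2k$-th root of unity. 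Substituting $\zeta$ of order $d\mid\gcd(m,2k)$ with $d>1$ gives $|\{s\in S:d\mid s\}|=(n-1)/(2k)$, and varying $d$ forces $\gcd(s,\gcd(m,2k))\in\{1,\gcd(m,2k)\}$ for every $s\in S$; this divisibility restriction then lets one exhibit a concrete element of $\mathbb{Z}_m\setminus\{0\}$ with no admissible representation $as$.

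The main obstacle is the residual case $\gcd(m,2k)=1$ --- in particular $m$ odd and coprime to $k$ --- where the low-order character sums vanish identically and only the fixed-point-free involution $\phi$ on $S$ (so $n$ even) remains. Closing this case is the delicate part; I would adapt the combinatorial techniques of the authors' earlier paper [PK], either by sharpening the character-sum analysis at primitive $m$-th roots of unity or by a direct pigeonhole exclusion that produces a non-representable element of $\mathbb{Z}_m\setminus\{0\}$.
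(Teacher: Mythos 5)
Your sufficiency check and your structural derivation are fine: the relation you extract (the representation of $-ks$ must use the multiplier $k$, hence $k(s+\phi(s))=0$ for an involution $\phi$ of $S$) is exactly the key claim in the paper's proof. But you then misidentify where the difficulty lies. The case you declare to be ``the main obstacle'' and leave open, $\gcd(m,2k)=1$, is in fact immediate from your own relation: if $\gcd(m,k)=1$ then $k(s+\phi(s))=0$ gives $\phi(s)=-s\in S$, and since $k\ge 3$ both $1$ and $-1$ lie in $M$, so $s=1\cdot s=(-1)\cdot(-s)$ are two distinct representations ($s\ne -s$ because $o(s)\ge 2k>2$), a contradiction. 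This is precisely the paper's claim that any splitting of $\mathbb{Z}_q$ with $q>1$ forces $\gcd(k,q)>1$; as written, your proposal leaves its easiest case unproven.

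The genuine gap is the case you dispatch with a sketch, $\gcd(m,2k)>1$ --- in particular $2k\mid m$ with $m>2k$, and odd $m$ sharing a factor with $k$. Your character-sum count is correct as far as it goes (for each $d\mid\gcd(m,2k)$, $d>1$, one gets $|\{s\in S: d\mid s\}|=(n-1)/(2k)$), but the concluding step ``this lets one exhibit a non-representable element'' is asserted, not proved, and nothing in the count obviously rules out, say, $m=2kt$ with $t>1$ coprime to $2k$. The paper closes exactly these cases with two nontrivial imported results: Lemma \ref{gen-pk-lem} (the Woldar-type reduction from \cite{[W]} and \cite{[PK]}), which, since $2k=(k-1)+k+1$ is composite and $\gcd(2k,m)>1$, forces $2k\mid m$ with $\gcd(2k,\,m/2k)=1$; and Lemma \ref{spec-cyclic} (Zhang--Ge), which divides out the trivial splitting of $\mathbb{Z}_{2k}$ to produce a splitting of $\mathbb{Z}_{m/2k}$, to which the coprimality contradiction above applies and yields $m/2k=1$. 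Your proposal contains no substitute for either reduction (and your pairing argument for even $m$ with $2k\nmid m$, while correct, becomes redundant once they are available), so the core of the ``only if'' direction remains open; to complete the proof you would need to either invoke these lemmas or reprove statements of comparable strength.
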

Next we prove a theorem which is very useful in the proof of Theorems \ref{primeB} and \ref{finally-thm}.

\begin{thm} \label{thm1}
Let $n$, $k_1$ and $k_2$ be positive integers with $n\geq 2$, $1\leq k_1< k_2$.
If $[-k_1, \,k_2]^*$ splits $\mathbb{Z}_{n(k_1+k_2)+1}$,
then $k_1\leq n-2$ and $k_2\leq 2n-5$.
\end{thm}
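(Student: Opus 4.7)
The plan is to work with the splitter set $S$ of size $n$ and the block partition $\mathbb{Z}_m^*=\bigsqcup_{s\in S}Ms$, each block of size $|M|=k_1+k_2$. The size count forces $(a-a')s\neq 0$ in $\mathbb{Z}_m$ for distinct $a,a'\in M$, hence $\mathrm{ord}(s)\ge k_1+k_2+1$, equivalently $\gcd(s,m)\le n$, for every $s\in S$.

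For each $s\in S$ the two ``boundary'' multiples $(k_2+1)s$ and $-(k_1+1)s$ are nonzero (their absolute coefficients are below $\mathrm{ord}(s)$), so each lies in some block $Mt$ of the splitting. If either one lies in $Ms$ itself, a short congruence check on $(k_2+1-a)s\equiv 0\pmod{m}$ with $a\in M$ forces $a=-k_1$ and hence $\mathrm{ord}(s)=k_1+k_2+1$; then $\mathrm{ord}(s)\mid m$ combined with the identity $m=n(k_1+k_2+1)-(n-1)$ forces $(k_1+k_2+1)\mid(n-1)$, so $k_1+k_2\le n-2$. Together with $k_1<k_2$ and $n\ge 2$, this immediately yields both $k_1\le n-2$ and $k_2\le 2n-5$, finishing this case.

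So the remaining work lies in the generic case in which, for every $s\in S$, both boundary multiples land in $Mt$ with $t\neq s$. This produces fixed-point-free successor maps $\tau,\tau':S\to S$ together with multiplier functions $a,a':S\to M$ satisfying
\[
(k_2+1)s=a(s)\,\tau(s), \qquad -(k_1+1)s=a'(s)\,\tau'(s).
\]
Telescoping around any $\tau$-cycle $s_1\to s_2\to\cdots\to s_\ell\to s_1$ yields the multiplicative congruence $(k_2+1)^\ell\equiv a(s_1)\cdots a(s_\ell)\pmod{\mathrm{ord}(s_1)}$ with $|a(s_1)\cdots a(s_\ell)|\le k_2^\ell$, and an analogous congruence holds along every $\tau'$-cycle with $-(k_1+1)$ in place of $(k_2+1)$.

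The crux of the argument, and its main obstacle, is then to convert these multiplicative congruences—combined with pigeonhole counts on the fibres of $\tau$ and $\tau'$ and the asymmetry $k_1<k_2$—into the tight linear bounds $k_1\le n-2$ and $k_2\le 2n-5$. Because the bounds are sharp (at $n=2,3$ they already exclude every admissible $k_1<k_2$, and at $n=4$ only the three pairs $(1,2),(1,3),(2,3)$ survive), the extraction cannot afford to be very lossy. I expect that the decisive ingredient will be an analysis of the shortest cycles (lengths $2$ and $3$) of $\tau$ and $\tau'$, together with the consistency requirement that $\tau$ and $\tau'$ act on the \emph{same} partition into blocks $Ms$, which ties the two multiplicative congruences together and pins down the extremal behavior.
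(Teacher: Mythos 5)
Your proposal has a genuine gap: the ``generic case'' is the entire content of the theorem, and you leave it unproved. The preliminary reductions are fine --- the observation that $\mathrm{ord}(s)\ge k_1+k_2+1$, and the case where $(k_2+1)s$ (or $-(k_1+1)s$) falls back into its own block $Ms$, which correctly forces $\mathrm{ord}(s)=k_1+k_2+1$ and hence $(k_1+k_2+1)\mid (n-1)$ --- but after setting up the successor maps $\tau,\tau'$ and the telescoped congruences $(k_2+1)^\ell\equiv a(s_1)\cdots a(s_\ell)\pmod{\mathrm{ord}(s_1)}$, you explicitly defer the conversion of these into $k_1\le n-2$ and $k_2\le 2n-5$ to an unspecified analysis of short cycles. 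There is no argument given that such congruences, even combined with pigeonhole on fibres, can yield linear bounds of this strength; a single congruence modulo $\mathrm{ord}(s_1)\approx m$ between quantities of size roughly $k_2^\ell$ is compatible with $k_2$ far larger than $n$ unless one has much finer control, and nothing in the proposal supplies it. So what is written is a plausible setup plus a statement of hope, not a proof.

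For comparison, the paper's mechanism is different and is where the sharp bounds actually come from: for a pair $s,s'$ of splitter elements one studies the map $f(i,j)=is+js'$ restricted to a lattice rectangle $A$ (e.g.\ $\{(i,j):0\le i\le 2n+2l-3,\ 0\le j\le k\}$, or $\{-k_1\le i\le n-2,\ 0\le j\le k_2\}$). Non-injectivity of $f|_A$ gives a bounded relation $xs+ys'=0$ which, by a pigeonhole over the $n-1$ pairs $(s_1,s_j)$ forcing the coefficients $x_j$ to be distinct, contradicts the splitting; injectivity gives the counting inequality $|A|\le |G|=n(k_1+k_2)+1$, and it is precisely this two-dimensional count, together with an analysis of the unique missing element when $|A|=|G|-1$, that produces $k_1\le n-2$ and $k_2\le 2n-5$. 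Even then the bounds are only reached after eliminating sporadic small cases ($[-1,3]^*$ does not split $\mathbb{Z}_{17}$, $[-2,4]^*$ cannot split a group of order $25$, $[-1,2]^*$ cannot split a group of order $10$) via separate results on quartic residues and nonsingular splittings; your proposal contains no counterpart to either the two-variable counting step or these case eliminations, and these are exactly the missing ideas.
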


Finally we prove that   Conjectures \ref{conj} and \ref{conj1} hold for  cyclic groups $\mathbb{Z}_q$ of various $q$, we have

\begin{thm} \label{primeB}
Let $k_1$, $k_2$, $m$ be integers with $0\leq k_1\leq k_2$ and $k_2\ge3$, and let $\mathbb{Z}_m\setminus \{0\}=[-k_1,k_2]^*\cdot S$ be a splitting of the cyclic group $\mathbb{Z}_m$ with the splitter set $S$. If $\gcd(s, \,m)=1$ for all $s\in S$, then either $m=k_1+k_2+1$ or $k_1=0$ and $m=2k_2+1$.
\end{thm}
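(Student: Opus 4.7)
I would let $n := |S| = (m-1)/(k_1+k_2)$. If $n=1$ then $|M|=m-1$, so $M=\mathbb{Z}_m\setminus\{0\}$ and $m=k_1+k_2+1$, which is the first conclusion. From now on I assume $n\geq 2$ and aim to deduce $k_1=0$ and $n=2$. The key new tool I plan to exploit is a divisibility identity coming from the unit hypothesis. Fix any prime $p\mid m$: the nonzero multiples of $p$ in $\mathbb{Z}_m$ number $m/p-1$, and since every $s\in S$ is coprime to $p$, the condition $p\mid is$ reduces to $p\mid i$. Counting through $\mathbb{Z}_m\setminus\{0\}=\bigsqcup_{s\in S}sM$ therefore gives
\[
\frac{m}{p}-1 \;=\; n\bigl(\lfloor k_1/p\rfloor+\lfloor k_2/p\rfloor\bigr).
\]
Writing $k_i=p\lfloor k_i/p\rfloor+r_i$ with $r_i\in[0,p)$ and substituting $m=n(k_1+k_2)+1$, this collapses to
\[
n(r_1+r_2)=p-1 \qquad\text{for every prime }p\mid m.
\]
In particular $n\mid p-1$, so every prime divisor of $m$ is at least $n+1$.

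To rule out $k_1\geq 1$, I first apply Theorem \ref{thm1} in the subcase $1\leq k_1<k_2$ to obtain $k_1\leq n-2$ and $k_2\leq 2n-5$; combined with $k_2\geq 3$ this forces $n\geq 4$ and $k_1+k_2\leq 3n-7$. The diagonal case $k_1=k_2=k$ is handled separately by exploiting that each coset $sM$ of $[-k,k]^*$ is closed under negation, which forces $2n\mid p-1$ and analogous parameter restrictions. Now let $p$ be the smallest prime dividing $m$. If $p>k_2$ then $r_i=k_i$ and the key identity collapses to $m=p$, so $m$ is prime; the Theorem \ref{thm1} bounds then leave only finitely many candidate quadruples $(m,k_1,k_2,n)$, each of which I would eliminate by a direct inspection of the disjoint cosets $sM$ in $\mathbb{Z}_p^*$. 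If $p\leq k_2$ instead, the chain $n+1\leq p\leq 2n-5$ forces $n\geq 6$, and comparing the key identity across the primes of $m$ (together with $k_1+k_2\leq 3n-7$) yields the contradiction.

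Finally I treat $k_1=0$ with $n\geq 2$ and show $n=2$. The key identity simplifies to $nr_2=p-1$. After normalizing so that $1\in S$ (legal because each $s$ is a unit), $[1,k_2]$ is one block of the partition. For $n=2$ the complementary block $s_2\cdot[1,k_2]$ must equal $[k_2+1,m-1]$; summing this equality in $\mathbb{Z}_m$ and using $\gcd(k_2(k_2+1)/2,m)=1$ forces $s_2\equiv-1$, so $m=2k_2+1$. For $n\geq 3$ I would track where $-1,-2,\dots$ sit among the cosets $sM$ and, using the key identity at each prime of $m$, produce a contradiction. The principal obstacle throughout is the prime-modulus subcase of the middle paragraph, because there the key identity degenerates ($r_i=k_i$) and Theorem \ref{thm1} alone must shoulder the burden, with the finite list of surviving candidates eliminated by hand via inspection of the cosets $sM\subset\mathbb{Z}_p^*$.
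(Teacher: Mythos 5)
Your opening identity $n(r_1+r_2)=p-1$ is exactly the counting the paper performs (the paper carries it out for \emph{every} divisor $d$ of $m$ with $1<d<m$, obtaining $(v_1+v_2)|S|=d-1$), and invoking Theorem \ref{thm1} in the case $1\le k_1<k_2$ is also the paper's route; so the skeleton is right. But the proposal has genuine gaps, the most serious being the prime-modulus case $p>k_2$, i.e.\ $m$ prime. Your claim that the bounds $k_1\le n-2$, $k_2\le 2n-5$ then ``leave only finitely many candidate quadruples'' is false: for any fixed $(k_1,k_2)$ every sufficiently large $n$ satisfies both bounds, so there are infinitely many candidates $m=n(k_1+k_2)+1$, and no finite inspection of cosets is available. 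In fact this case cannot be closed at all along your lines, because you never use the purely-singular hypothesis (stated in the abstract and used in the paper's proof, though dropped from the theorem's wording): purely singularity forces every prime divisor of $m$ to be at most $k_2$, hence $m$ cannot be prime (since $m\ge k_1+k_2+1>k_2$) and every nontrivial divisor satisfies $d\le k_2$. Without that hypothesis, splittings of $\mathbb{Z}_p$ by intervals are a genuine nonsingular phenomenon governed by conditions such as those in Lemma \ref{lemH3}, not something one rules out by checking a short list.

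The remaining cases are asserted rather than proved, and the assertions hide the real work. For $k_1=0$, $n\ge3$ you say you would ``track where $-1,-2,\dots$ sit''; the paper instead combines the Stein--Szab\'o semicross bound (Lemma \ref{semi-cross}), $k_2\le n-2$, with the divisor identity to get $n\le (v_1+v_2)n=d-1\le k_2-1\le n-3$, a contradiction --- and again this needs a divisor $1<d<m$ with $d\le k_2$, i.e.\ purely singularity. (For $n=2$ nothing needs proving: $m=2k_2+1$ is immediate from $m=nk_2+1$, so your normalization-and-summation argument is superfluous.) The diagonal case $k_1=k_2$ is left at ``analogous parameter restrictions''; the paper disposes of it with the cross bound of Lemma \ref{cross}. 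Finally, in the subcase $1\le k_1<k_2$ with a prime $p\le k_2$ you claim that ``comparing the key identity across the primes yields the contradiction,'' but it does not: the identity together with $k_2\le 2n-5$ only forces $r_1+r_2=1$ and every nontrivial divisor to equal $n+1$, i.e.\ $m=p^2$, $k_1=1$, $k_2=p$, $|S|=p-1$, a configuration that survives all of this counting. The paper needs a further explicit argument at that point --- writing $S=\{l_ip+i:1\le i\le p-1\}$ and exhibiting an element with two representations --- and your proposal has no counterpart to that terminal step.
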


\begin{thm} \label{finally-thm}
Let $\alpha$, $\beta$, $k_1$, $k_2$ be integers, $1\leq k_1\leq k_2$ and  $k_2\ge3$. 
Suppose $[-k_1, \,k_2]^*$ splits a cyclic group $\mathbb{Z}_{m}$.
If the splitting is purely singular and $m=2^{\alpha}p^{\beta}$ or $2^{\alpha}p_1p_2$
with $\alpha\geq 0$, $\beta\geq 1$ and $p$, $p_1$, $p_2$ are odd primes,
then $m=k_1+k_2+1$. 

Furthermore, if $k_1=0$, then either $m=k_2+1$ or $m=2k_2+1$.
\end{thm}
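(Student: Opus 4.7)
The plan is to show that $n := (m-1)/(k_1+k_2)$ must equal $1$, which gives $m = k_1+k_2+1$ immediately; for the $k_1=0$ addendum, the argument is extended to conclude $n \in \{1,2\}$ and hence $m \in \{k_2+1,\,2k_2+1\}$.

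Set $K := k_1+k_2$, so $m = nK+1$. First assume $k_1\ge 1$ and suppose for contradiction that $n \ge 2$. By Theorem~\ref{thm1}, $k_1 \le n-2$ and $k_2 \le 2n-5$; together with $k_2\ge 3$, these force $n \ge 4$ and $K \le 3n-7$. The purely singular hypothesis means every prime divisor $q$ of $m$ must divide some element of $[-k_1,k_2]^*$, so $q \le k_2 \le 2n-5$. Note also that if every $s \in S$ were coprime to $m$, Theorem~\ref{primeB} would give $m = K+1$, contradicting $m = nK+1 \ge 2K+1$; so there is at least one prime $q \mid m$ with $|S_q|\ge 1$, where $S_q := \{s\in S : q\mid s\}$.

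The main counting tool is an identity inside the subgroup $q\mathbb{Z}_m$ (of order $m/q$), whose $m/q-1$ nonzero elements are partitioned by the splitting as
\[
q\mathbb{Z}_m\setminus\{0\} \;=\; \bigsqcup_{s\in S_q} Ms \;\sqcup\; \bigsqcup_{s\in S\setminus S_q} M_q \cdot s,
\]
where $M_q := \{a\in M : q\mid a\}$, using that $q\mid as$ together with $q\nmid s$ forces $q\mid a$. Counting cardinalities yields
\[
|S_q|\cdot K + (n-|S_q|)\cdot|M_q| \;=\; \tfrac{m}{q}-1, \qquad |M_q| = \lfloor k_1/q\rfloor + \lfloor k_2/q\rfloor,
\]
and hence
\[
|S_q| \;=\; \frac{n(K-q|M_q|)+1-q}{q(K-|M_q|)} \;\in\; \mathbb{Z}_{\ge 0}.
\]
For $m = 2^\alpha p^\beta$ with $\alpha \ge 1$, or $m = 2^\alpha p_1p_2$, the identity is applied at all two or three distinct prime divisors of $m$. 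The small modulus $q=2$ combined with $K \le 3n-7$ and the parities of $k_1,k_2$ very quickly makes the fractional expression above non-integral; the three-prime case $2^\alpha p_1p_2$ is further overdetermined.

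The genuinely delicate case is the lone-prime situation $m = p^\beta$ ($\alpha=0$), where only the equation at $q=p$ is available. Here, writing $K = pM_p + r$ with $r = (k_1\bmod p)+(k_2\bmod p) \in [0,2p-2]$ and combining $|S_p|\ge 1$, purely singular ($M_p\ge 1$, $K \ge p$), and $nK = p^\beta-1$ leads to the inequality $r(n-p) \ge (p-1)(pM_p+1)$, which in particular forces $n \ge (3p+1)/2$; coupling this with $n \le (p^\beta-1)/p$ rules out $\beta \le 2$ outright, and leaves only finitely many triples $(p,n,K)$ for each $\beta \ge 3$ (since $K$ must divide $p^\beta-1$). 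Each surviving triple is then eliminated by computing $|S_p|$ in closed form and checking non-integrality over the admissible pairs $(k_1,k_2)$. This Diophantine sieve is the main obstacle: the coarse inequalities do not by themselves kill $\beta \ge 3$ and one must exploit the precise floor-function behavior of $|M_p|$. For the furthermore with $k_1=0$, Theorem~\ref{primeB} now permits $m \in \{K+1,\,2K+1\}$ (i.e., $n\in\{1,2\}$), and an analogous counting-and-integrality argument, using the $k_1=0$ analogues of the bounds of Theorem~\ref{thm1}, excludes $n\ge 3$.
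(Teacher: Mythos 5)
Your counting identity at a prime $q\mid m$ is correct (the nonzero elements of $q\mathbb{Z}_m$ are exactly $\bigcup_{s\in S_q}Ms\cup\bigcup_{s\notin S_q}M_qs$), but at the two decisive points the proposal substitutes a hope for an argument, so there is a genuine gap. For $\alpha\ge 1$ you claim the $q=2$ equation ``very quickly'' produces non-integrality. It does not: $m=nK+1$ even forces $K$ odd, so exactly one of $k_1,k_2$ is odd, $|M_2|=(K-1)/2$, and your formula collapses to $|S_2|(K+1)=n-1$, which is perfectly consistent whenever $K+1$ divides $n-1$; no contradiction comes from $q=2$ alone, and you give no argument at the odd primes to finish. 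More seriously, in the case $m=p^\beta$ you concede that the coarse inequalities do not kill $\beta\ge 3$ and that ``finitely many triples $(p,n,K)$ for each $\beta$'' remain to be checked; since $\beta$ is unbounded this is not a finite verification, and no proof is offered that the surviving triples are actually eliminable by the floor-function behaviour of $|M_p|$. You also invoke Theorem~\ref{thm1}, which requires $k_1<k_2$, while the statement allows $k_1=k_2$; that case is never treated in your plan.

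The paper's proof runs along a different and complete route that your sketch does not replace. For odd $m$ it proves Corollary~\ref{cyclic-p-gr}: when $m=p^\beta$ it invokes Szab\'o's structural theorem (Lemma~\ref{p-group-sp}) that in any splitting of $\mathbb{Z}_{p^{\alpha}}$ either $M$ or $S$ consists of elements prime to $p$; purely singular puts a multiple of $p$ in $M$, hence $\gcd(s,m)=1$ for all $s\in S$, and when $m=p_1p_2$ the same coprimality is immediate since $|S_{p_1}|=|S_{p_2}|=0$; Theorem~\ref{primeB} then finishes (and its proof, not Theorem~\ref{thm1}, handles $k_1=k_2$ via Lemma~\ref{cross}). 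For $\alpha\ge 1$ it does not count at $q=2$ at all: $K+1$ is even, so $\gcd(K+1,m)>1$, Lemma~\ref{gen-pk-lem} gives $K+1\mid m$ with $\gcd\bigl(K+1,\frac{m}{K+1}\bigr)=1$, Lemma~\ref{spec-cyclic} descends the splitting to $\mathbb{Z}_{m/(K+1)}$, whose order is odd of the form $p^\gamma$ or $p_1p_2$, and Corollary~\ref{cyclic-p-gr} forces $\frac{m}{K+1}\in\{1,K+1\}$, the latter contradicting the coprimality, so $m=K+1$. To repair your proposal you would need either to import Lemma~\ref{p-group-sp} (at which point your counting machinery becomes unnecessary) or to supply a genuine argument closing the $p^\beta$ sieve and the even cases; as written, neither is done.
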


\begin{thm}Let $k_1$, $k_2$, $m$ be integers with $0\leq k_1\leq k_2$ and $4\le k_1+k_2\le14$. Then there does not exist any  purely singular perfect $B[-k_1, \,k_2](m)$ set except for $m = 1$  and except possibly for $m = k_1 + k_2 + 1$,  and except possibly for $m=2(k_1+k_2)+1$ when $k_1=0$. In particular,  Conjecture \ref{conj} holds for $k_1+k_2\le14$. \end{thm}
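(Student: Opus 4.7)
The plan is to do a finite case analysis that leverages the structural results proved earlier in the paper (Theorems~\ref{minus=1-splitting}, \ref{thm1}, \ref{primeB}, \ref{finally-thm}) together with Hickerson's computational verification of Conjecture~\ref{conj1} for $k<3000$.

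First I dispose of two easy regimes. The case $k_1=0$ is the hypothesis of Conjecture~\ref{conj1}, and for $k_2\le 14$ Hickerson's verification instantly yields $m\in\{1,k_2+1,2k_2+1\}$. The case $k_1\ge 1$ with $k_2=k_1+1$ is Theorem~\ref{minus=1-splitting}: the only cyclic group split by $[-k+1,k]^*$ is $\Z_{2k}=\Z_{k_1+k_2+1}$, so no unexpected $m$ can occur.

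For the remaining pairs $(k_1,k_2)$ with $k_1\ge 1$ and $k_2-k_1\ne 1$, I suppose toward a contradiction that a purely singular splitting exists with $m\notin\{1,k_1+k_2+1\}$. Then $m=n(k_1+k_2)+1$ with $n\ge 2$, and pure singularity forces every prime divisor of $m$ to lie in $[2,k_2]\subseteq[2,14]$. When $k_1<k_2$, Theorem~\ref{thm1} further forces $n\ge\max\{k_1+2,\lceil(k_2+5)/2\rceil\}$; when $k_1=k_2$, the congruence $m\equiv 1\pmod{2k_1}$ together with the small-prime constraint already trims the admissible $m$. In every $(k_1,k_2)$ this yields a short, easily enumerated list of candidate $m$. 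Moreover, since $k_1\ge 1$, Theorem~\ref{primeB} rules out the possibility that every $s\in S$ is coprime to $m$, so some $s_0\in S$ satisfies $\gcd(s_0,m)>1$. For each candidate $m$ whose factorization has the form $2^{\alpha}p^{\beta}$ or $2^{\alpha}p_1p_2$, Theorem~\ref{finally-thm} forces $m=k_1+k_2+1$, contradicting $n\ge 2$.

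The main obstacle will be the short residual list of candidates whose factorization has two or more distinct odd primes with at least one appearing to a power $\ge 2$, so that Theorem~\ref{finally-thm} does not apply directly; in the range $k_1+k_2\le 14$ the principal representative is $m=225=3^2\cdot 5^2$, a candidate for several pairs such as $(1,13)$, $(6,8)$, and $(7,7)$. For these I would use a gcd-spectrum argument: every $a\in[-k_1,k_2]^*$ satisfies $\gcd(a,m)\le k_2$, so a product $as$ with $\gcd(s,m)=1$ satisfies $\gcd(as,m)\in\{\gcd(a,m):a\in M\}$; any $x\in\Z_m$ whose $\gcd(x,m)$ is a larger divisor of $m$ (for $m=225$, the elements with $\gcd(x,225)\in\{9,25\}$) can only come from non-unit $s\in S$ of a compatible gcd-class, and the count-identity $|M_p|\cdot|S|+(|M|-|M_p|)\cdot|S\cap p\Z_m|=m/p-1$ (for each prime $p\mid m$) pins down how many such $s$ can exist. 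A short check shows these non-units cannot simultaneously cover all the necessary large-gcd elements, producing the contradiction. Since $|S|=(m-1)/(k_1+k_2)$ is small in every residual case (e.g.\ $|S|=16$ when $m=225$ and $k_1+k_2=14$), a direct search also suffices for any residual tuple the gcd argument does not immediately dispatch.
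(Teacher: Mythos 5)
Your reduction of the easy regimes ($k_1=0$ via Hickerson, $k_2=k_1+1$ via Theorem \ref{minus=1-splitting}, and candidates of the shape $2^{\alpha}p^{\beta}$ or $2^{\alpha}p_1p_2$ via Theorem \ref{finally-thm}) matches the paper, but the core of your plan has a genuine gap: the set of residual moduli is \emph{infinite}, so neither ``a short, easily enumerated list of candidate $m$'' nor ``a direct search'' is available. The only constraints you impose are $m\equiv 1\pmod{k_1+k_2}$, a lower bound on $n$ from Theorem \ref{thm1}, and that all prime divisors of $m$ are at most $k_2$; these leave unbounded exponents. For example, for $(k_1,k_2)=(2,5)$ every $m=15^{t}$ with $t\ge 2$ (and many other $3^{\alpha}5^{\beta}$) satisfies all of your conditions and is not of the form $2^{\alpha}p^{\beta}$ or $2^{\alpha}p_1p_2$, so your argument must dispose of an infinite family, which a finite check or a search over $S$ of size $(m-1)/(k_1+k_2)$ cannot do. The step that is supposed to handle these cases, the ``gcd-spectrum argument,'' is only asserted (``a short check shows these non-units cannot simultaneously cover all the necessary large-gcd elements''); no such check is carried out, and as stated it is exactly the part of the proof that needs to be done uniformly in the exponents $\alpha,\beta$.

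The paper closes precisely this gap with two ingredients you omit. First, Lemma \ref{gen-pk-lem} together with Lemma \ref{spec-cyclic} reduces to moduli with $\gcd(m,(k_1+k_2)(k_1+k_2+1))=1$; this is what eliminates, e.g., all multiples of $3$ and $5$ when $k_1+k_2+1=15$, so that for $[-1,13]^{*}$ only $m=11^{\alpha}13^{\beta}$ survives, and your putative residual candidates such as $m=225$ for $(1,13)$, $(6,8)$, $(7,7)$ never arise (for $(7,7)$ in particular, $15\mid 225$ with $\gcd(15,225/15)\neq 1$ already contradicts Lemma \ref{gen-pk-lem}). Second, for the surviving shapes the paper argues uniformly in $\alpha,\beta$: it counts elements of each gcd-class against $\varphi(m)$, $\varphi(m/p)$ to get divisibility contradictions valid for all exponents, and it uses the factorization $\mathbb{Z}_m'=M(m)\cdot B(m)$ (Lemma \ref{BjiaoM}) together with Proposition \ref{po1} (a parity obstruction coming from Proposition 2.1, since $M(m)$ has odd size and contains $\pm1$) to kill several families outright. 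Your counting identity $|M_p||S|+(|M|-|M_p|)|S\cap p\mathbb{Z}_m|=m/p-1$ is in the right spirit, but without the coprimality reduction and without actually executing the exponent-uniform counting (or the parity argument), the proof does not go through.
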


\section{Preliminaries}

An equivalence between lattice tilings and Abelian-group splittings was described in [1, \, 2, \,3]. In \cite{[SAT]}, there are two important lemmas (in the language of splitting).

\begin{lem}[\cite{[SAT]}, Theorem 2] \label{cross}
If $n\geq 2$ and the $[-k, \, k]^\ast$ splits the cyclic group $\mathbb{Z}_{2kn+1}$, then $k\leq n-1$.
\end{lem}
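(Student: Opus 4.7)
The plan is a direct combinatorial pigeonhole, exploiting the rigidity that the interval $[-k,k]^\ast$ itself is forced to be the block of a single distinguished splitter.

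First, normalize the splitting. Let $1 = m_0 s_0$ be the unique representation of $1 \in \mathbb{Z}_q$; since a zero divisor $m$ of $q$ would prevent $m s_0$ from equaling $1$, we have $\gcd(m_0,q)=1$, so $s_0 = m_0^{-1}$ exists, and $m_0 S$ is again a splitter set (multiplication by a unit preserves the splitting structure). Replacing $S$ by $m_0 S$, I may assume $1 \in S$. The block of $1$ is literally $[-k,k]^\ast$, and uniqueness of representations forces $S \cap [-k,k]^\ast = \{1\}$: any further $s$ in that interval would admit the two distinct representations $s = s \cdot 1 = 1 \cdot s$. Consequently the splitter of every $i \in [1,k]$ is $1$, and each element of $\mathbb{Z}_q \setminus [-k,k]$ has splitter in $S \setminus \{1\}$.

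Next, focus on the short interval $J = [k+1,\,2k]$, of exactly $k$ elements. The key step is to show that the splitter map $\phi : J \to S \setminus \{1\}$ is injective, which would immediately give $k \le |S \setminus \{1\}| = n-1$. For the injectivity, suppose $k+i_1 = m_1 s^\ast$ and $k+i_2 = m_2 s^\ast$ with $i_1 \ne i_2 \in [1,k]$. Subtracting gives $(m_1 - m_2)\,s^\ast \equiv i_1 - i_2 \pmod q$, and since $|i_1 - i_2| \le k-1$, the element $i_1-i_2$ lies in $[-k,k]^\ast$, whose splitter is forced to be $1$ with multiplier $i_1-i_2$. If $m_1-m_2 \in [-k,k]^\ast$, the equation $(m_1-m_2)\,s^\ast = i_1-i_2$ exhibits a second valid representation with splitter $s^\ast \ne 1$, a contradiction. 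Hence $|m_1-m_2| > k$, which forces $m_1$ and $m_2$ to have opposite signs with $|m_1| + |m_2| > k$.

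The naive pigeonhole stops here and yields only the intermediate bound $k \le 2(n-1)$; the main obstacle is sharpening to $k \le n-1$. I would try to close the gap by exploiting the negation-symmetry $-s^\ast[-k,k]^\ast = s^\ast[-k,k]^\ast$ of each block: the two multipliers $m_1,m_2$ over $J$ automatically pair with $-m_1,-m_2$ over $-J = [-2k,-k-1]$, yielding the auxiliary relations $(m_1+m_2)\,s^\ast \equiv \pm(2k+i_1+i_2)$ and $(m_1-m_2)\,s^\ast \equiv \pm(i_1-i_2)$. Tracking when the right-hand sides are forced into $[-k,k]^\ast$ (whose splitter is $1$) while the corresponding left-hand multipliers lie in $[-k,k]^\ast$ should produce a third small element with two distinct representations, eliminating the case $|\phi^{-1}(s^\ast) \cap J|=2$. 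Carrying this bookkeeping through carefully — or, as a fallback, passing through the multiplicative structure via discrete logarithm when $q$ is prime, which translates the splitting into an additive tiling of $\mathbb{Z}_{q-1}=\mathbb{Z}_{2kn}$ by $n$ translates of $\log([-k,k]^\ast)$ amenable to Coven--Meyerowitz cyclotomic criteria — is where the real work lies. The hardest case will be a composite $q$ with small prime divisors incompatible with the multiplicative reduction, forcing a fully combinatorial argument via the symmetry pairing above.
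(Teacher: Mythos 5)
Your reduction to $1\in S$, the observation that $S\cap[-k,k]^\ast=\{1\}$, and the analysis of two elements of $J=[k+1,2k]$ sharing a splitter are all correct, but the proof is not complete: what you actually establish is that at most two elements of $J$ share a splitter, hence only $k\le 2(n-1)$, and you acknowledge this yourself. The proposed sharpening via negation symmetry does not close the gap. From $k+i_1=m_1s^\ast$ and $k+i_2=m_2s^\ast$ with $m_1,m_2$ of opposite signs and $|m_1|+|m_2|>k$, the sum relation reads $(m_1+m_2)s^\ast=2k+i_1+i_2$ with $|m_1+m_2|=|\,|m_1|-|m_2|\,|$, which can perfectly well lie in $[1,k]$ (take $m_1=k$, $m_2=-1$). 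For $n\ge 3$ the element $2k+i_1+i_2\le 4k$ lies strictly between $k$ and $q-k$, so it belongs neither to $[-k,k]^\ast$ nor to $\pm J$, and the relation is nothing but its unique legitimate representation --- no contradiction arises. So the ``bookkeeping'' you defer is precisely the missing half of the argument, and the one-dimensional window $J$ does not carry enough information to supply it. The discrete-logarithm fallback is also not viable as stated: $q=2kn+1$ need not be prime, and even when it is, the image of $[-k,k]^\ast$ under a discrete logarithm is not an interval, so the resulting tiling of $\mathbb{Z}_{2kn}$ is not of a shape to which cyclotomic (Coven--Meyerowitz type) criteria apply.

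For comparison: the paper does not prove this lemma (it is quoted from Stein and Szab\'o), but the technique it uses in Section 4 to prove the analogous asymmetric statements (Lemmas \ref{Elemmid} and \ref{Olemmid}, Proposition \ref{EOthm}) is genuinely two-dimensional rather than a pigeonhole on a single interval. For a pair $s,s'$ of splitter elements one studies $f(i,j)=is+js'$ on a rectangle $A$ whose cardinality exceeds, or nearly equals, $|G|$ exactly when $k$ is too large; failure of injectivity of $f|_A$ produces a relation $xs+ys'=0$ with $x$ in a short range and $1\le|y|\le k$, and the requirement that the resulting values of $x$ be pairwise distinct as $s'$ ranges over the other $n-1$ splitter elements forces one such relation with $x\le k$, contradicting unique representation. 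Replacing your interval $J$ by such a rectangle is the step your argument is missing.
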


\begin{lem}[\cite{[SAT]}, Theorem 3] \label{semi-cross}
If $n\geq 3$ and the $[1, \, k]$ splits the cyclic group $\mathbb{Z}_{kn+1}$, then $k\leq n-2$.
\end{lem}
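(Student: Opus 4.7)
The plan is to follow the standard character/generating-function framework for cyclic-group splittings, combined with a combinatorial wrap-around count along the lines of Stein's original argument for semicross packings. Set $N = kn+1$ and suppose, for contradiction, that $[1,k]$ splits $\mathbb{Z}_N$ with splitter set $S$ of size $n$ and that $k \ge n-1$.

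First I would record the character identity. Writing $\chi_t(g) = e^{2\pi i gt/N}$ for $t \in \{1, \ldots, N-1\}$, the splitting gives
\[
\sum_{s \in S}\sum_{i=1}^k \chi_t(s)^i = -1,
\]
and when $\chi_t(s) \ne 1$ each inner sum is the geometric sum $\chi_t(s)(1 - \chi_t(s)^k)/(1 - \chi_t(s))$. Since $1 \in \mathbb{Z}_N$ must itself be covered by the splitting, at least one element of $S$ is a unit of $\mathbb{Z}_N$; scaling $S$ by its inverse allows me to normalize so that $1 \in S$, and the block $1 \cdot [1,k] = \{1, 2, \ldots, k\}$ is then covered exclusively by $s = 1$.

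The decisive step is then a wrap-around count on the remaining $n-1$ blocks $\{s, 2s, \ldots, ks\} \bmod N$ for $s \in S \setminus \{1\}$; these must together partition $[k+1, N-1]$. I would analyse the ``top strip'' $[N-k, N-1]$: for each $s \in S \setminus \{1\}$, the arithmetic progression $s, 2s, \ldots, ks$ taken modulo $N$ contains a tightly controlled number of elements of this strip, essentially one per wrap-around. Balancing the total of $k$ elements in the top strip against the per-$s$ contribution first yields $n - 1 \ge k$, and then a refinement (using that the element $s \in S \setminus \{1\}$ which itself lies in the top strip must contribute its own wrap through the bottom strip $[1, k]$, which is already exhausted by $s=1$) produces the required bound $n \ge k + 2$, contradicting $k \ge n-1$.

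The main obstacle is making the wrap-around bookkeeping sharp enough to rule out the boundary case $k = n - 1$ rather than only $k \ge n$. Stein handles this combinatorially in \cite{[SAT]}, exploiting the fact that an element $s \in S$ with $s$ close to $N/k$ forces its block to cycle through several strips, and the total number of strips crossed across all $s$ is constrained by $|S| = n$. An alternative route is to work directly with the character identity above at a well-chosen $t$ (for instance $t = 1$), bound $\bigl|\sum_{s \in S} \chi_t(s)(1-\chi_t(s)^k)/(1-\chi_t(s))\bigr|$ in terms of $k$ and $n$, and obtain a magnitude contradiction with $|-1| = 1$ precisely when $k \ge n-1$.
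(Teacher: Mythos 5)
The paper does not prove this statement at all --- it is imported verbatim from Stein and Szab\'o (\cite{[SAT]}, Theorem 3) --- so there is no internal proof to compare against; your proposal therefore has to stand on its own, and it does not. It is an outline rather than a proof, and its one concrete combinatorial claim is incorrect. You assert that each block $\{s,2s,\dots,ks\}$ with $s\in S\setminus\{1\}$ meets the top strip $[N-k,N-1]$ in ``essentially one'' element per wrap-around and deduce $n-1\ge k$. Disjointness of blocks only forbids two strip elements $is,js$ (with $i<j$) whose residues \emph{increase}: that would give $(j-i)s\equiv r$ for some $r\in[1,k-1]$, double-covering $r$ against the block of $1$. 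It does not forbid decreasing runs. For instance $s=N-1$ has block $\{N-1,N-2,\dots,N-k\}$, which is the entire top strip and is disjoint from $\{1,\dots,k\}$ for all $n\ge 2$, so a single block can absorb all $k$ strip elements and no bound of the form $n-1\ge k$ follows from this count. Beyond that, you explicitly label the elimination of the boundary case $k=n-1$ as ``the main obstacle'' and defer it to \cite{[SAT]} --- but that case is essentially the entire content of the lemma, so the proof is circular at its crucial step. The character-sum alternative is likewise undeveloped: the identity $\sum_{s\in S}\sum_{i=1}^{k}\chi_t(s)^i=-1$ holds for every genuine splitting, including the many that exist when $k\le n-2$, so no single-character magnitude bound at $t=1$ can yield a contradiction unless it is made quantitatively sensitive to the exact threshold, and nothing in the sketch indicates how that would happen.

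If you want a self-contained argument in the spirit of this paper, the right tool is the injectivity-plus-counting device the authors use for their generalization (Lemmas \ref{Elemmid} and \ref{Olemmid} and Proposition \ref{EOthm}): for two splitter elements $s,s'$ consider $f(i,j)=is+js'$ on a rectangle $A$ whose cardinality exceeds $|G|$ under the hypothesis $k\ge n-1$; non-injectivity produces a relation $xs+ys'=0$ with small $x$ and $y$, which is then played off against the disjointness of the blocks, with a separate analysis of the near-injective boundary case. That is essentially Stein's original argument, and it is what your wrap-around heuristic would need to be sharpened into.
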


To prove our main theorems, we need some lemmas.
The following lemma will be used repeatedly.

\begin{lem}[\cite{[ZG]}, Lemma 2] \label{spec-cyclic}
If $m|n$ and there exist both a perfect $B[-k_1, \,k_2](m)$ set and a perfect $B[-k_1, \,k_2](n)$ set,
then there exists a perfect $B[-k_1, \,k_2](n/m)$ set.
\end{lem}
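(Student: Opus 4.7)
The plan is a case analysis on the pair $(k_1,k_2)$ with $4\le k_1+k_2\le 14$. Write $m=n(k_1+k_2)+1$; the goal is to rule out every $n\ge 2$ (and $n\ge 3$ when $k_1=0$). For each pair I would combine three ingredients: a lower bound on $n$ from Lemma~\ref{cross} (when $k_1=k_2$), Lemma~\ref{semi-cross} (when $k_1=0$), or Theorem~\ref{thm1} (when $1\le k_1<k_2$); the purely singular hypothesis, which forces every prime of $m$ to be at most $k_2$; and the congruence $m\equiv 1\pmod{k_1+k_2}$, which gives $\gcd(m,k_1+k_2)=1$. Consequently the primes of $m$ lie in the small set $P(k_1,k_2)=\{p\text{ prime}:p\le k_2,\ p\nmid k_1+k_2\}$.

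For $k_1=0$, Hickerson's verification of Conjecture~\ref{conj1} for all $k<3000$ handles every $k_2\le 14$. For $k_1\ge 1$, whenever $P(k_1,k_2)$ contains at most one odd prime, every admissible $m$ has shape $2^{\alpha}p^{\beta}$ with $\beta\ge 1$, is a pure power of $2$, or equals $1$; Theorem~\ref{finally-thm} then forces $m=k_1+k_2+1$ in the first subcase, while the pure-power-of-$2$ subcase is covered by the authors' earlier treatment of $q=2^{n}$ noted in the introduction. This clears all pairs with $k_1+k_2\le 6$ and, more generally, all pairs where $k_2$ is too small to admit two distinct odd primes coprime to $k_1+k_2$. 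The residual pairs lie in $k_1+k_2\in\{7,8,\ldots,14\}$ with $k_2$ sufficiently large; for these, Theorem~\ref{finally-thm} further eliminates every squarefree $m$ of shape $2^{\alpha}p_1p_2$, leaving only candidates $m$ divisible by the square of some odd prime in $P$ and by at least one further odd prime in $P$.

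For these residual candidates I would invoke Lemma~\ref{spec-cyclic}: since $\mathbb{Z}_{k_1+k_2+1}$ is trivially split by $[-k_1,k_2]^{*}$ with splitter $\{1\}$, whenever $(k_1+k_2+1)\mid m$ one obtains a perfect $B[-k_1,k_2](m/(k_1+k_2+1))$ set, to which Theorem~\ref{finally-thm} reapplies, forcing a contradiction by induction on the number of prime factors. The main obstacle is a short finite list of exceptional $m$ (for instance $m=225$ when $(k_1,k_2)=(2,5)$) whose only divisors $d\equiv 1\pmod{k_1+k_2}$ are $1$ and $m$ itself, so Lemma~\ref{spec-cyclic} yields no nontrivial reduction. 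For each such exceptional $m$ I expect the argument to proceed either by verifying directly that $\gcd(s,m)=1$ for every $s$ in a putative splitter set, whereupon Theorem~\ref{primeB} closes the case, or by a Sylow-type analysis exploiting the guaranteed presence in $M$ of elements divisible by each prime of $m$. Since only finitely many pairs $(k_1,k_2)$ fall in the stated range, and each yields only finitely many exceptional $m$ after the preceding filters, the remaining bookkeeping is explicit and finite.
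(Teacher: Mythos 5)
Your proposal does not prove the statement in question. The statement is Lemma~\ref{spec-cyclic}, a reduction result about quotients: if $m\mid n$ and perfect $B[-k_1,k_2](m)$ and $B[-k_1,k_2](n)$ sets both exist, then a perfect $B[-k_1,k_2](n/m)$ set exists. What you have written is instead a proof sketch of the paper's final theorem (the nonexistence result for $4\le k_1+k_2\le 14$): a case analysis over pairs $(k_1,k_2)$, invoking the purely singular hypothesis, Lemmas~\ref{cross} and~\ref{semi-cross}, Theorem~\ref{thm1}, Theorem~\ref{finally-thm}, and so on. None of that machinery is relevant to Lemma~\ref{spec-cyclic}, which carries no purely singular hypothesis, no bound on $k_1+k_2$, and no restriction on the arithmetic shape of $m$ or $n$. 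Worse, your argument explicitly \emph{invokes} Lemma~\ref{spec-cyclic} (``For these residual candidates I would invoke Lemma~\ref{spec-cyclic}\ldots''), so as a proof of that lemma it would be circular.

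What is actually needed is a structural argument about splittings. Identifying the perfect $B[-k_1,k_2](n)$ set with a splitting $\mathbb{Z}_n\setminus\{0\}=M\cdot S$ where $M=[-k_1,k_2]^*$, one uses the unique subgroup $H\le\mathbb{Z}_n$ with $H\cong\mathbb{Z}_m$ together with the hypothesis that $M$ also splits $\mathbb{Z}_m$ to produce a splitting of the quotient $\mathbb{Z}_n/H\cong\mathbb{Z}_{n/m}$; the core of the proof is a counting/uniqueness argument relating $S\cap H$ to the splitter sets of $H$ and of $G/H$ (this is the content of Hickerson-type quotient lemmas, and it is how Zhang and Ge prove their Lemma~2, which the paper simply cites). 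Your proposal never engages with this quotient construction at all, so there is no partial credit to salvage here: the entire argument addresses a different theorem.
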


We also need the following lemma (\cite{[W]} Theorem 6).

\begin{lem} \label{gen-plus=n-1} Suppose $[1, \,k]$ splits $\mathbb{Z}_m$ with $2k + 1$ composite. Then either
\begin{description}
  \item[(i)] $\gcd(2k + 1, \,m) = 1$, or
  \item[(ii)] $2k + 1$ divides $m$ and $\gcd(2k + 1, \frac{m}{2k+1}) = 1$.
\end{description}

\end{lem}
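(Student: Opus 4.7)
The plan is a finite case analysis over the pairs $(k_1, k_2)$ with $0 \le k_1 \le k_2$ and $4 \le k_1 + k_2 \le 14$; the main tools are Theorems \ref{minus=1-splitting}, \ref{thm1}, \ref{primeB}, \ref{finally-thm} of this paper, Lemma \ref{spec-cyclic}, the results of Zhang and Ge \cite{[ZG]} for the families $[-1, k]^*$ and $[-2, k]^*$, and Hickerson's numerical verification of Conjecture \ref{conj1} for $k < 3000$ reported in \cite{[W]}.

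Suppose $[-k_1, k_2]^*$ splits $\mathbb{Z}_m$ purely singularly with $m > 1$. Write $m = n(k_1+k_2)+1$ and suppose $n \ge 2$ (otherwise $m = k_1+k_2+1$ and there is nothing to prove). Several sub-ranges are settled by previous results and can be removed at the outset: $k_1 = 0$ is covered by Hickerson's verification of Conjecture \ref{conj1}; $(k_1, k_2) = (k-1, k)$ with $k \ge 3$ by Theorem \ref{minus=1-splitting}; and $k_1 \in \{1, 2\}$ with $k_2 \in \{3, 4, 5, 6, 8, 9, 10\}$ or $\{3, 4, 6\}$ respectively by \cite{[ZG]}. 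This reduces the problem to a concrete finite list of pairs with $k_1 \ge 2$, $k_2 \ge k_1+1$, and $k_2 \ge k_1+2$ unless $k_1 = k_2$; for the symmetric pairs $(k, k)$ with $2 \le k \le 7$, Lemma \ref{cross} replaces Theorem \ref{thm1}.

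For each surviving pair $(k_1, k_2)$, the candidate values of $m$ are narrowed by three conditions. First, pure singularity forces every prime divisor of $m$ to be at most $k_2 \le 14$, hence to lie in $\{2, 3, 5, 7, 11, 13\}$. Second, Theorem \ref{thm1} (or Lemma \ref{cross} in the symmetric case) gives $n \ge \max(k_1+2, \lceil (k_2+5)/2\rceil)$, hence a lower bound on $m$. Third, Theorem \ref{finally-thm} excludes any $m$ of shape $2^\alpha p^\beta$ or $2^\alpha p_1 p_2$, so the odd part of $m$ must either contain at least three distinct primes or take the form $p_1^{a_1} p_2^{a_2}$ with $a_1 + a_2 \ge 3$. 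A direct tabulation shows that the resulting list of candidate $m$ for each $(k_1, k_2)$ is short, and in many cases empty.

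Each remaining candidate is eliminated by iterating Lemma \ref{spec-cyclic}: if some $d \mid m$ with $1 < d < m$ satisfies $d \equiv 1 \pmod{k_1+k_2}$, a perfect $B[-k_1, k_2](d)$ set must exist, but then either Theorem \ref{finally-thm} or Theorem \ref{primeB} forces $d = k_1+k_2+1$, which typically fails the shape or congruence of the proposed $d$. The main obstacle is the residual sub-case where $m = p_1^{a_1} p_2^{a_2}$ admits no non-trivial intermediate divisor in residue class $1 \pmod{k_1+k_2}$, prototypically $m = 3^2 \cdot 5^2 = 225$ for $(k_1, k_2) = (2, 5)$. For such $m$ one has to run a Sylow-style analysis on $\mathbb{Z}_m \cong \mathbb{Z}_{p_1^{a_1}} \oplus \mathbb{Z}_{p_2^{a_2}}$ parallel to the proof of Theorem \ref{finally-thm}, tracking how $[-k_1, k_2]^*$ and its $p_i$-parts act on each Sylow component to derive a counting contradiction. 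Because with $k_2 \le 14$ only a handful of such $m$ arise across all pairs, the bookkeeping stays tractable. Combining all of the above yields the classification, and the ``In particular'' clause for Conjecture \ref{conj} is simply the $k_1 \ge 1$ sub-statement of what has been proved.
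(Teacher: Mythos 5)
Your proposal does not address the statement at hand. The statement to be proved is Lemma~\ref{gen-plus=n-1}: if $[1,\,k]$ splits $\mathbb{Z}_m$ and $2k+1$ is composite, then either $\gcd(2k+1,\,m)=1$, or $2k+1\mid m$ and $\gcd(2k+1,\,\frac{m}{2k+1})=1$. What you have written instead is an outline of a case analysis over all pairs $(k_1,k_2)$ with $4\le k_1+k_2\le 14$, invoking Theorems~\ref{minus=1-splitting}, \ref{thm1}, \ref{primeB} and \ref{finally-thm} --- that is, you have sketched a proof of the paper's final classification theorem, not of this lemma. Nothing in your text engages with the hypothesis that $2k+1$ is composite, with the dichotomy between $\gcd(2k+1,m)=1$ and $2k+1\mid m$, or with the specific divisibility conclusion $\gcd(2k+1,\frac{m}{2k+1})=1$.

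Beyond the mismatch, the logic would be circular even if one tried to salvage it: Lemma~\ref{gen-plus=n-1} (together with its generalization Lemma~\ref{gen-pk-lem}) is one of the inputs used \emph{upstream} of the theorems you cite --- for instance, the proof of Theorem~\ref{minus=1-splitting} explicitly applies Lemma~\ref{gen-pk-lem}, and the proof of Theorem~\ref{finally-thm} applies it as well --- so those theorems cannot be used to establish it. In the paper this lemma is not proved at all; it is quoted as Theorem~6 of Woldar~\cite{[W]}, where the argument is a direct analysis of the splitting itself (roughly: studying how the multiplier set $[1,k]$ and the subgroup structure of $\mathbb{Z}_m$ interact at primes dividing $2k+1$). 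A correct blind proof would have to work at that level, with the splitting equation $\mathbb{Z}_m\setminus\{0\}=[1,k]\cdot S$ and the prime factorization of $2k+1$, rather than by appeal to the downstream classification results.
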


Combining Lemma 2.5 \cite{[PK]} and  Theorem $5$ in \cite{[W]}  yields the following lemma.
\begin{lem} \label{gen-pk-lem}
Let $k_1,$ $k_2$ be integers, $0\leq k_1\leq k_2$. Suppose there exists a perfect $B[-k_1, \,k_2](m)$ set with $k_1 + k_2 + 1$ composite.
If $k_1+k_2\geq 4$, $1\leq k_1\leq k_2$ or $k_1=0$, then either
\begin{description}
  \item[(i)] $\gcd(k_1 + k_2 + 1,m) = 1,$ or
  \item[(ii)] $k_1 + k_2 + 1|m$ and $\gcd(k_1 + k_2 + 1, \frac{m}{k_1+k_2+1}) = 1$.
\end{description}
\end{lem}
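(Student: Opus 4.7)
The plan is to argue by contradiction: assume a purely singular perfect $B[-k_1,k_2](m)$ set exists with $m$ outside the claimed exceptional list, then combine Theorems \ref{primeB}, \ref{thm1}, and \ref{finally-thm} with a finite case analysis over the pairs $(k_1,k_2)$ satisfying $4 \le k_1+k_2 \le 14$ to derive a contradiction.

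The case $k_1=0$ follows immediately from the paper's citation of Hickerson's verification of Conjecture \ref{conj1} for all $k < 3000$, since here $k_2 \le 14$. Henceforth assume $k_1 \ge 1$. By Theorem \ref{primeB} there must be some $s \in S$ with $\gcd(s,m) > 1$; pure singularity then bounds every prime divisor of $m$ by $k_2 \le 14$, so the primes of $m$ lie in $\{2,3,5,7,11,13\}$. By Theorem \ref{finally-thm}, $m$ is not of the form $2^{\alpha} p^{\beta}$ or $2^{\alpha} p_1 p_2$ with $p,p_1,p_2$ odd, so its factorisation must involve either three distinct odd primes or two distinct odd primes with one of multiplicity $\ge 2$. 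Writing $m = n(k_1+k_2)+1$, Theorem \ref{thm1} yields $n \ge \max(k_1+2,\lceil (k_2+5)/2\rceil)$ when $k_1 < k_2$, while Lemma \ref{cross} yields the analogous bound $n \ge k_1+1$ when $k_1=k_2$.

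For each admissible pair $(k_1,k_2)$ with $k_1 \ge 1$, one lists the candidates $m > k_1+k_2+1$ compatible with all these constraints and the congruence $m \equiv 1 \pmod{k_1+k_2}$. Many pairs are killed immediately: either the admissible prime pool reduces to $\{2,3\}$ (forcing $m = 2^a 3^b$, excluded by Theorem \ref{finally-thm}), or the congruence $m \equiv 1 \pmod{k_1+k_2}$ with $k_1+k_2 \in \{6,9,10,12\}$ restricts the primes of $m$ sufficiently to force $m$ to be a prime power, again excluded. The remaining pairs (such as $(1,6),(2,5),(2,6),(3,5),(5,6)$ and the analogous pairs with $k_2 \in \{7,\ldots,14\}$) admit a short explicit list of candidate $m$; for each such $m$, iterating Lemma \ref{spec-cyclic} reduces $m$ to a proper divisor $d > 1$ with $d \equiv 1 \pmod{k_1+k_2}$ that itself admits a perfect $B[-k_1,k_2](d)$ set, and the descent, together with Lemma \ref{gen-pk-lem} when $k_1+k_2+1$ is composite, ends at a base case contradicting Theorem \ref{thm1} or Theorem \ref{finally-thm}.

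The main obstacle is the bookkeeping: for each surviving pair one must identify the handful of candidate $m$ (for instance $m = 225 = 3^2\cdot 5^2$ for $(1,6)$ and $(2,5)$, which survives all three structural filters and the congruence modulo $7$) and verify that the descent terminates in a contradiction. The most delicate cases are those with $k_2 \in \{11,12,13,14\}$, where the prime pool is largest and more candidates survive the structural cuts; there the descent via Lemma \ref{spec-cyclic} may chain through several intermediate divisors before the constraints from Theorems \ref{thm1} and \ref{finally-thm} finally collide. The ``In particular'' clause is the specialisation of the main statement to $1 \le k_1 < k_2$, which is exactly Conjecture \ref{conj} for $k_1+k_2 \le 14$.
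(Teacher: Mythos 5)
Your proposal does not address the statement it is supposed to prove. The statement is Lemma \ref{gen-pk-lem}, a dichotomy about the greatest common divisor of $k_1+k_2+1$ and $m$: either $\gcd(k_1+k_2+1,m)=1$, or $k_1+k_2+1$ divides $m$ exactly once in the sense that $\gcd(k_1+k_2+1,\frac{m}{k_1+k_2+1})=1$. Nothing in your argument establishes, or even mentions, either alternative. What you have written is instead an outline of a proof of the paper's final theorem (the nonexistence of purely singular perfect $B[-k_1,k_2](m)$ sets for $4\le k_1+k_2\le 14$): the finite case analysis over pairs $(k_1,k_2)$, the appeal to Theorems \ref{primeB}, \ref{thm1} and \ref{finally-thm}, the descent via Lemma \ref{spec-cyclic}, and the closing remark about the ``In particular'' clause all belong to that theorem, not to this lemma. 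Worse, your outline explicitly invokes Lemma \ref{gen-pk-lem} itself as a tool (``together with Lemma \ref{gen-pk-lem} when $k_1+k_2+1$ is composite''), so even read as a proof of the lemma it would be circular. There is also a logical-order problem: Lemma \ref{gen-pk-lem} is a preliminary used in the proofs of Theorems \ref{minus=1-splitting} and \ref{finally-thm}, so it cannot be derived from them.

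For the record, the paper does not give an independent argument here at all: it obtains the lemma by combining Lemma 2.5 of the authors' earlier paper \cite{[PK]} (which handles the case $1\le k_1\le k_2$ with $k_1+k_2\ge 4$) with Theorem 5 of Woldar \cite{[W]} (which handles $k_1=0$). A correct blind proof would therefore either cite those two results or reprove the relevant coset-counting/reduction arguments from them; your proposal does neither.
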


\begin{defn}Let $(G, \cdot)$ be an abelian group (written multiplicatively). If each element $g\in G$ can be expressed uniquely in the form
$$g = a \cdot b, a \in A,\,\, b \in B,$$
then the equation $G = A \cdot B$ is called a {\it factorization} of $G$. A non-empty subset of $G$ is called to be a {\it direct  factor} of $G$ if there exists a subset $B$ such that $G=A\cdot B$ is a factorization. \end{defn}

We also need the following result for the factorization of abelian groups.

\begin{prop}{\rm (\cite{SzS09}Theorem 7.12)}If $ G = A \cdot B$ is a factorization of the finite abelian group $G$ (written multiplicatively) and $k$ is an integer relatively prime to $|A|$, then $ G = A^k\cdot B$ is a factorization of the abelian group $G$, where $A^k=\{a^k : a \in A\}$.\end{prop}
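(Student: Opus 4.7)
The plan is a case analysis over the pairs $(k_1,k_2)$ with $0\le k_1\le k_2$ and $4\le k_1+k_2\le 14$, invoking the theorems already established in the paper.

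The two boundary cases go fast. When $k_1=0$, Conjecture~\ref{conj1} has been verified by Hickerson for all $k<3000$, and a fortiori for our range $k_2\le 14$, yielding $m\in\{1,\,k_2+1,\,2k_2+1\}$. When $k_1=k_2=k$ with $2\le k\le 7$, the purely singular hypothesis forces every prime divisor of $m$ to lie in the short list of primes at most $k$; since $m\equiv 1\pmod{2k}$ is odd for $k\ge 2$, the surviving candidates are prime powers $p^\beta$ with $p\le k$ odd. Theorem~\ref{finally-thm} applied in the prime-power form then forces $m=2k+1$, which is either consistent with or excluded by the purely singular restriction, giving the desired conclusion.

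For the main range $1\le k_1<k_2$ with $4\le k_1+k_2\le 14$, I would first use Lemma~\ref{gen-pk-lem}(ii) together with Lemma~\ref{spec-cyclic} to iteratively divide out $k_1+k_2+1$ from $m$, reducing to the coprime case $\gcd(k_1+k_2+1,m)=1$. If this leaves $m=1$ or $m=k_1+k_2+1$ we are done; otherwise Theorem~\ref{thm1} gives $m=n(k_1+k_2)+1$ with $n\ge\max(k_1+2,\lceil(k_2+5)/2\rceil)$, a concrete lower bound on $m$. Purely singular further restricts the prime divisors of $m$ to $\{p:p\le k_2\}\subseteq\{2,3,5,7,11,13\}$, and removing divisors of $k_1+k_2+1$ leaves only a short list of permissible primes. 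When $m$ factors as $2^\alpha p^\beta$ or $2^\alpha p_1 p_2$, Theorem~\ref{finally-thm} directly yields $m=k_1+k_2+1$, contradicting the lower bound from Theorem~\ref{thm1}.

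The hard part is the residual situation in which $m$ has three or more distinct odd prime divisors, or two odd prime divisors with one of multiplicity $\ge 2$. For each concrete pair $(k_1,k_2)$ the set of admissible primes is very small (often just $\{3,5\}$, $\{3,7\}$, or $\{3,5,7\}$), so the candidate $m$'s have a rigid multiplicative shape subject to $m\equiv 1\pmod{k_1+k_2}$ and the lower bound from Theorem~\ref{thm1}. Such candidates are eliminated by combining an order/counting argument in each prime-power component of $\mathbb{Z}_m^*$ (via a Chinese-Remainder decomposition) with Proposition~2.7, which permits replacement of the multiplier set $M=[-k_1,k_2]^*$ by $M^\ell=\{a^\ell:a\in M\}$ for any $\ell$ coprime to $|M|=k_1+k_2$, thereby reducing a complex multiplicative-shape case to one already handled. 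Carrying out this verification uniformly across all remaining $(k_1,k_2)$ is the main technical burden of the proof.
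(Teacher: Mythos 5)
Your proposal does not prove the statement it was asked to prove. The statement is the factorization--replacement proposition (quoted in the paper from Szab\'o and Sands, Theorem 7.12 of their book): if $G=A\cdot B$ is a factorization of a finite abelian group and $\gcd(k,|A|)=1$, then $G=A^{k}\cdot B$ is again a factorization. What you have written is instead a strategy for the paper's final theorem, the nonexistence of purely singular perfect $B[-k_1,k_2](m)$ sets for $4\le k_1+k_2\le 14$. Worse, your sketch \emph{uses} the proposition in question as a known tool --- you invoke it to replace the multiplier set $M=[-k_1,k_2]^*$ by $M^{\ell}$ for $\ell$ coprime to $|M|$ --- so even read charitably it is circular with respect to the assigned statement. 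Nothing in your text engages with the sets $A$ and $B$, the exponent $k$, or the definition of a factorization, so there is no content here to check against the claim.

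For completeness: the paper itself offers no proof, since it cites the result from the literature. A self-contained argument (due to Sands) reduces to the case $k=p$ a prime not dividing $|A|$, works in the group ring, and shows for each nontrivial character $\chi$ of $G$ that $\chi(A)\chi(B)=0$ forces $\chi(A^{p})\chi(B)=0$, via the congruence $\chi(A^{p})\equiv\chi(A)^{p}\pmod{p}$ in the ring of cyclotomic integers; the hypothesis that $p$ does not divide $|A|$ rules out a collapse $|A^{p}|<|A|$, and the count $|A^{p}|\,|B|=|G|$ then upgrades the character identities to a genuine factorization. If your task was to prove this proposition, that is the argument to reconstruct; the case analysis you wrote belongs to a different theorem of the paper.
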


\section{Proof of Theorem \ref{minus=1-splitting}}

In \cite{[H]}, the authors proved that if $G\setminus \{0\}=MS$ is a purely singular splitting with $|M|=3$,
then $G=\mathbb{Z}_{2^{2r}}$ for some $r\geq 0$; moreover Schwartz \cite{[SC]} has constructed an infinite family of
purely singular perfect $B[-1, 2](4^l)$ sets.


If $M=[-k+1, \,k]^*$ with $k\geq 3$, Schwartz \cite{[SC]} proved that $(k, \,|G|)\neq 1$.
We will prove Theorem 1.1.

{\bf Proof of Theorem 1.1:}
Suppose  $[-k+1,k]^*$ splits $\mathbb{Z}_q$ with the splitter set $S$ and $n=|S|$.
It suffices to show that $q=2k$.

We claim that if $q>1$, then $(k,q)>1$.
Let $S=\{a_1,\cdots ,a_n\}$.
Obviously, for any $1\leq u\leq n$, $-ka_u\in \mathbb{Z}_q\setminus \{0\}.$
Set $-ka_u=ia_j$ where $i\in [-k+1, \,k]^\ast$ and $1\leq j\leq n.$
If $i<k$, then we have $ka_u=-ia_j$, a contradiction.
Hence, $i=k$ and $k(a_u+a_j)=0$.
If $gcd(k, \,q)=1$, then $a_u=-a_j$, which is impossible.
Therefore, $(k, \,q)>1$, and so $(2k, \,q)>1$.

Since $(k-1)+k\geq 4$ ($k\geq 3$) and $(k-1)+k+1=2k$ is composite, by Lemma \ref{gen-pk-lem},
we have $2k|q$ and $(2k, \frac{q}{2k}) = 1$.
By Lemma \ref{spec-cyclic}, it is easy to see that $[-k+1, \,k]^*$ splits $\mathbb{Z}_{\frac{q}{2k}}$.
If $\frac{q}{2k}>1$, then by the claim, $(k, \,\frac{q}{2k})>1$, which is impossible.
The converse is obvious. This completes the proof.
\qed

\section{Proof of Theorem \ref{thm1}}

In this section, we will prove Theorem \ref{thm1}. Theorem \ref{thm1} can be viewed as a generalization of
Lemma \ref{cross} and \ref{semi-cross}.
To prove Theorem \ref{thm1}, we need some lemmas.

\begin{lem} \label{Elemmid}
Let $n$, $k$ and $l$ be integers with $n\geq 2$, $k\geq n-1$ and $l\geq 1$.
Suppose  $[-k, \,k+2l]^\ast$ splits the cyclic group $\mathbb{Z}_{n(2k+2l)+1}$.
Let $s$ and $s'$ be two elements of a splitter set.
Then one of these two conditions holds:
\begin{description}
  \item[(a)] There are integers $x$ and $y$, $1\leq x\leq 2n+2l-3$, $1\leq |y|\leq k$, such that $xs+ys'=0$;
  \item[(b)] $s'=\pm(2n+2l-2)s$, $k=n-1$ or $l=1$ and $s$ is a generator of $\mathbb{Z}_{n(2k+2l)+1}$.
\end{description}
\end{lem}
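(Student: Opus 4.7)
The plan is to examine the $2n+2l-2$ translates $s' + is$ for $i = 1, 2, \ldots, 2n+2l-2$ inside $\mathbb{Z}_q$, where $q = n(2k+2l)+1$ and hence $|S|=n$. By the splitting hypothesis, each non-zero such element has a unique representation $s' + is = m_i t_i$ with $m_i \in M := [-k, k+2l]^\ast$ and $t_i \in S$. The goal is to show that either a relation of type (a) is produced, or one translate vanishes at the extremal index, which yields (b).

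First, if $s' + is = 0$ for some $i \in [1, 2n+2l-2]$, then $is + s' = 0$ with $(x,y) = (i, 1)$ is exactly case (a) whenever $i \leq 2n+2l-3$ (legal since $k \geq n-1 \geq 1$). The lone surviving index is $i = 2n+2l-2$, which gives $s' = -(2n+2l-2)s$. The ``$+$'' case $s' = +(2n+2l-2)s$ comes from the parallel analysis of $s' - is$, where $y=-1$ plays the role of $y=1$. Assuming now that (a) fails and no translate vanishes, I would partition the indices according to $t_i$. If $t_i = s$, then $s' = (m_i - i)s$, giving $(i - m_i)s + s' = 0$; to avoid (a), $|i - m_i|$ must fall outside $[1, 2n+2l-3]$, which, combined with $i \leq 2n+2l-2$ and $m_i \in [-k, k+2l]^\ast$, confines $(i, m_i)$ to a narrow enumerable set. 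If $t_i = s'$, then $is = (m_i - 1)s'$, so $is + (1-m_i)s' = 0$; to avoid (a), either $m_i = 1$, forcing $is \equiv 0$ and an order constraint on $s$, or $|m_i - 1| > k$, i.e.\ $m_i \in \{-k\} \cup \{k+2, \ldots, k+2l\}$. If $t_i \notin \{s, s'\}$, a third splitter is involved without an immediate short relation.

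The counting step exploits pairwise distinctness of the pairs $(m_i, t_i)$: any collision for $i \neq i'$ forces $(i - i')s = 0$, so $s$ has order at most $2n+2l-3$, which, combined with the failure of (a), either yields a short relation in disguise or produces the generator conclusion of (b). Matching the $2n+2l-2$ indices against the bounded pools dictated by the three sub-cases, and using $|M| = 2k+2l$ together with $|S \setminus \{s, s'\}| = n-2$, one shows that the only way to accommodate all indices without triggering (a) is for some translate to vanish at $i = 2n+2l-2$, which is the assertion of (b). The hard part will be tightening this accounting to extract the ancillary conditions in (b): the dichotomy $k = n-1$ or $l = 1$ and that $s$ generates $\mathbb{Z}_q$. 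I expect the generator condition to emerge from the distinctness step (a proper divisor order of $s$ creating a disguised short relation), and the dichotomy from the precise boundary of the counting inequalities, at which $k = n-1$ or $l = 1$ are exactly the two configurations where all slack in $M$ and in $[1, 2n+2l-2]$ is simultaneously consumed; any deviation re-introduces at least one type (a) relation.
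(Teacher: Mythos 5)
There is a genuine gap here: your counting step cannot close. Condition (a) only concerns relations between $s$ and $s'$, so an index $i$ whose representation $s'+is=m_it_i$ uses a third splitter $t_i\notin\{s,s'\}$ produces no constraint you can use. For a fixed third splitter $t$, distinct indices give distinct elements $s'+is$, hence distinct multipliers $m_i$, but that allows up to $|M|=2k+2l$ indices per third splitter, and with $n-2$ third splitters the available capacity $(n-2)(2k+2l)$ dwarfs the $2n+2l-2$ indices you examine. So all indices can be absorbed by third splitters without any translate vanishing and without triggering (a); your "matching against bounded pools" has no contradiction to extract. Moreover, even granting that some translate vanishes, your window of $2n+2l-2$ translates carries no information that could force the side conditions in (b): the dichotomy $k=n-1$ or $l=1$ is a global constraint, and the generator claim does not follow from $s'=\pm(2n+2l-2)s$ alone without knowing the pair $\{s,s'\}$ generates and controlling the order of $s$.

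The paper's proof works at a different scale. It sets $f(i,j)=is+js'$ and looks at the full rectangle $A=\{(i,j):0\le i\le 2n+2l-3,\ 0\le j\le k\}$, whose size $(2n+2l-2)(k+1)=2n(k+l)+2(l-1)(k+1-n)$ is comparable to $|G|=2n(k+l)+1$. If $f$ is not injective on $A$, a collision (after ruling out equal first or equal second coordinates via the order of $s$) gives exactly a relation of type (a). If $f$ is injective, the inequality $|A|\le|G|$ forces $(l-1)(k+1-n)=0$, i.e.\ $k=n-1$ or $l=1$ --- this is where the dichotomy actually comes from --- and then $f(A)$ misses exactly one element $x$. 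Since $x\pm s$ and $x\pm s'$ lie in $f(A)$ while $x$ does not, their preimages must sit on the boundary of the rectangle; writing out those four equations and eliminating $x$ yields either a type (a) relation or $s'=\pm(2n+2l-2)s$, and since $s,s'$ generate $G$ this makes $s$ a generator. To repair your argument you would need to replace the short list of translates by something of size comparable to $|G|$ (in effect, rediscover the rectangle $A$), at which point you are reproducing the paper's proof rather than giving an alternative one.
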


\begin{proof}
Define  a map $f:$ $\mathbb{Z}\oplus \mathbb{Z}\rightarrow \mathbb{Z}_{n(2k+2l)+1}$ by $f(i,j)=is+js'$.
Put $A=\{(i,j):0\leq i\leq 2n+2l-3, \, 0\leq j\leq k\}$.

If $f|_A:$ $A\rightarrow \mathbb{Z}_{n(2k+2l)+1}$ is not an injective map,
then there are two distinct elements $(i_1, \,j_1)$, $(i_2, \,j_2)$ of $A$ such that
$i_1s+j_1s'=i_2s+j_2s'$,  $i_1$,  $i_2\in [0,2n+2l-3]$ and $j_1$, $j_2\in [0,k]$. Note that
$i_1=i_2$ implies that $j_1s'=j_2s'$, which contradicts the fact that $[-k,k+2l]^*$ splits  $\mathbb{Z}_{n(2k+2l)+1}$.
If $j_1=j_2$, then $(i_1-i_2)s=0$ and hence $ord(s)\leq |i_1-i_2|\leq 2n+2l-3$.
Since $k\geq n-1$ and all elements $is$ for $i\in [-k, \,k+2l]^\ast$ are distinct and nonzero,
we have $ord(s)\geq 2k+2l+1\geq 2n+2l-1>2n+2l-3\geq ord(s)$, a contradiction.
Consequently, we may assume that $i_1<i_2$ and $j_1\neq j_2$.
Let $x=i_2-i_1$ and $y=j_2-j_1$.
Then $1\leq x\leq 2n+2l-3$, $1\leq |y|\leq k$ and $xs+ys'=0$. Hence condition (a) holds.

If $f|_A:$ $A\rightarrow \mathbb{Z}_{n(2k+2l)+1}$ is injective,
then $|A|=|f(A)|\leq |\mathbb{Z}_{n(2k+2l)+1}|=2n(k+l)+1$.
Since $|A|=(2n+2l-2)(k+1)=2n(k+l)+2(l-1)(k+1-n)\le 2n(k+l)+1$ and $k\ge n-1$,
we must have $k=n-1$ or $l=1$.
Thus $|A|=|f(A)|=2n(k+l)=|\mathbb{Z}_{n(2k+2l)+1}|-1$, say,  $$f(A)=\mathbb{Z}_{n(2k+2l)+1}\setminus\{x\}.$$
Obviously,  $\{x-s, \,x+s, \, x-s', \,x+s'\}\subseteq \mathbb{Z}_{n(2k+2l)+1}\setminus\{x\}$. Let
$$x-s=f(i, j)=is+js', \quad 0\le i\le 2n+2l-3, \,\, 0\le j\le k,$$
then $x=(i+1)s+js'\not\in f(A)$, which implies that $i=2n+2l-3$. Hence
\begin{gather}
x-s=f(2n+2l-3,j_1)=(2n+2l-3)s+j_1s', \quad j_1 \in [0,k]. \label{eq1}
\end{gather}
Similarly, there are $i_1$,  $i_2\in [0, \,2n+2l-3]$ and $j_2\in [0, \,k]$ such that
\begin{gather}
x+s=f(0, \,j_2)=j_2s';   \label{eq2} \\
x-s'=f(i_1, \,k)=i_1s+ks';   \label{eq3} \\
x+s'=f(i_2, \,0)=i_2s.    \label{eq4}
\end{gather}

Delete $x$ from (\ref{eq2}) and (\ref{eq4}), we obtain
\begin{gather}(i_2+1)s-(j_2+1)s'=0.   \label{eq5}
\end{gather}

Similarly, from (\ref{eq1}), (\ref{eq2}), (\ref{eq3}) and (\ref{eq4}) we conclude that
\begin{gather}
(2n+2l-1)s+(j_1-j_2)s'=0; \label{eq6} \\
(2n+2l-2-i_1)s+(j_1-(k+1))s'=0;   \label{eq7} \\
(2n+2l-2-i_2)s+(j_1+1)s'=0;   \label{eq8} \\
(i_1+1)s+(k+1-j_2)s'=0.    \label{eq9}
\end{gather}
If the condition (a) does not hold, since $i_2+1\le 2n+2l-2$ and $j_2+1\le k+1$, so $i_2=2n+2l-3$ or $j_2=k$ by (\ref{eq5}). Similarly, if the condition
 $(a)$ does not hold, then it follows from (\ref{eq2}), (\ref{eq3}) and (\ref{eq4}) that
$$i_1=0 \ or \ j_1=0;$$
$$i_2=0 \ or \ j_1=k;$$
$$i_1=2n+2l-3 \ or \ j_2=0.$$
If $i_1=2n+2l-3$, then $j_1=0$, $i_2=0$ and $j_2=k$.
By (\ref{eq9}), $$s'=-(2n+2l-2)s.$$
If $j_2=0$, then $i_2=2n+2l-3$, $j_1=k$ and $i_1=0$.
By (\ref{eq7}), $$s'=(2n+2l-2)s.$$

Since $s$ and $s'$ generate $\mathbb{Z}_{n(2k+2l)+1}$ and $s'=\pm(2n+2l-2)s,$ it follows that $s$ generates $\mathbb{Z}_{n(2k+2l)+1}$. This proves the lemma.
\end{proof}

\begin{lem} \label{Olemmid}
Let $n$, $k$ and $l$ be integers, $n\geq 2$, $k\geq n-1$ and $l\geq 0$.
Suppose that $[-k, \,k+2l+1]^*$ splits the cyclic group $\mathbb{Z}_{n(2k+2l+1)+1}$.
Let $s$ and $s'$ be elements of a splitter set.
Then one of the following statements holds:
\begin{description}
  \item[(a)] there are integers $x$ and $y$, $1\leq x\leq 2n+2l-2$, $1\leq |y|\leq k$, such that $xs+ys'=0$;
  \item[(b)] $s'=\pm(2n+2l-1)s$ and $G$ is cyclic with generator $s$ and $k=n-1$.
\end{description}
\end{lem}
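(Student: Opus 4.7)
The plan is to mirror the proof of Lemma \ref{Elemmid}, making the obvious parity adjustments to account for the fact that now $|M|=2k+2l+1$ is odd and $q=n(2k+2l+1)+1$. Define $f:\mathbb{Z}\oplus\mathbb{Z}\to\mathbb{Z}_{n(2k+2l+1)+1}$ by $f(i,j)=is+js'$ and take the box
\[
A=\{(i,j):0\le i\le 2n+2l-2,\ 0\le j\le k\},\qquad |A|=(2n+2l-1)(k+1).
\]
This is the natural analogue of the box used in Lemma \ref{Elemmid}; the parameter $2n+2l-2$ matches the bound on $x$ requested in (a), and $k$ matches the bound on $|y|$.

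First I would dispose of the non-injective case. If $f|_A$ fails to be injective, two distinct pairs $(i_1,j_1),(i_2,j_2)\in A$ give $(i_1-i_2)s+(j_1-j_2)s'=0$. The possibility $i_1=i_2$ is excluded because $s'$ has order at least $2k+2l+2$ (the elements of $[-k,k+2l+1]^{*}\cdot s'$ are distinct and nonzero), and $|j_1-j_2|\le k<2k+2l+2$. Similarly $j_1=j_2$ forces a relation $(i_1-i_2)s=0$ with $|i_1-i_2|\le 2n+2l-2$, contradicting $\mathrm{ord}(s)\ge 2k+2l+2\ge 2n+2l$. So after reordering we obtain $x=i_2-i_1$, $y=j_2-j_1$ with the size bounds of (a).

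Next I would analyze the injective case. Then $|A|\le q$, and a direct computation gives
\[
|A|-q=(k+1-n)(2l-1)-1.
\]
Since $k\ge n-1$, the factor $k+1-n$ is a non-negative integer, so $|A|\le q$ forces $(k+1-n)(2l-1)\le 1$; combined with $l\ge 0$ and the requirement $|A|\ge q-1$ when one wants the surjectivity argument to bite, this essentially isolates $k=n-1$ (so $|A|=q-1$). In that range $f(A)=\mathbb{Z}_q\setminus\{x\}$ for a unique $x$, and the four elements $x\pm s$, $x\pm s'$ lie in $f(A)$. Writing out these four expressions as $f(i,j)$ with $(i,j)\in A$ and then moving the unknown $x$ between pairs yields four linear relations in $s,s'$ modulo $q$ (exactly parallel to (4.1)--(4.9) in Lemma \ref{Elemmid}). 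Assuming (a) fails, each of those relations forces one coordinate of the corresponding $(i,j)$ to an extremal value $0$ or the maximum; bookkeeping the forced extremes in all four equations pins down the pattern and leaves the single relation $s'=\pm(2n+2l-1)s$. Because $s,s'$ together generate $\mathbb{Z}_q$, this relation makes $s$ a generator, giving (b).

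The main obstacle is the case analysis inside the injective branch: keeping track of which boundary of $A$ each of $x\pm s$, $x\pm s'$ is forced onto (once condition (a) is assumed to fail), and reading off the correct sign in $s'=\pm(2n+2l-1)s$. A secondary nuisance is the borderline values of $(k,l)$ where $|A|$ almost meets $q$ (for instance $l=0$ with $k\ge n$, or $l=1$ with $k=n$); these either satisfy the counting bound trivially, in which case one rules them out by noting that (a) must already hold from the non-injective argument applied to a slightly enlarged box, or they fall into the forced regime $k=n-1$ directly. Once this bookkeeping is done, the conclusions of (a) and (b) follow exactly as in the even case, and the proof closes.
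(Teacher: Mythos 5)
Your overall strategy is the one the paper uses: the same map $f(i,j)=is+js'$, the same box $A$ with $|A|=(2n+2l-1)(k+1)$, the non-injectivity argument yielding (a), and the ``missing element'' bookkeeping when $|A|=|G|-1$ yielding (b). The non-injective half and the $k=n-1$ half of your sketch are fine and match the paper (which itself only says that the $k=n-1$ case follows as in Lemma \ref{Elemmid}).

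The genuine gap is in the injective case when the count $|A|-|G|=(2l-1)(k+1-n)-1$ does not equal $-1$. Injectivity only forces $(2l-1)(k+1-n)\le 1$, which besides $k=n-1$ also allows $k=n$, $l=1$ (then $|A|=|G|$, $f|_A$ is a bijection, there is no missing element $x$, and the relations you plan to extract from $x\pm s$, $x\pm s'$ simply do not exist), and $l=0$, $k\ge n$ (then $|A|\le|G|-2$ and the surjectivity argument says nothing). Your proposed repair---rerun the collision argument on a slightly enlarged box---fails precisely at the bounds: a collision in a larger box gives $xs+ys'=0$ with possibly $x=2n+2l-1$ or $|y|=k+1$, which is not condition (a), and conclusion (b) asserts $k=n-1$, so it is unavailable in these cases. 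For $k=n$, $l=1$ the paper closes the case by a separate arithmetic argument that your sketch lacks: writing $-s=f(i,j)$ and $-s'=f(i_1,j_1)$ and assuming (a) fails forces $i=2n$ and $j_1=n$, hence $(2n+1)s+js'=0$ and $i_1s+(n+1)s'=0$; multiplying these by $n+1$, $2n+1$, $i_1$, $j$ and using $|G|=(2n+1)(n+1)$ together with the fact that $s$ and $s'$ generate $G$ gives $|G|\mid i_1j\le 2n^2<|G|$, a contradiction, so (a) must hold there after all. You need this (or an equivalent) argument; it is not a routine parity adjustment of the even case. (The residual subcase $l=0$, $k\ge n$ is in fact also skipped in the paper's stated dichotomy, so you are in no worse shape there, but your enlarged-box idea does not repair it either, for the same reason about the bounds in (a).)
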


\begin{proof} We prove the lemma by the similar argument as in the proof of Lemma \ref{Elemmid}.
Define a map $f:$ $Z\oplus Z\rightarrow G$ by $f(i, \,j)=is+js'$.
Let $A=\{(i, \,j):0\leq i\leq 2n+2l-2,0\leq j\leq k\}$.

If $f|_A:$ $A\rightarrow G$ is not an injective map, then similarly, condition (a) holds.

If $f|_A:$ $A\rightarrow G$ is injective, then $|A|=|f(A)|\leq |G|=n(2k+2l+1)+1$.
Since $|A|=(2n+2l-1)(k+1)=n(2k+2l+1)+(2l-1)(k+1-n)$,
we have either $k=n-1$ and $|A|=|G|-1$ or $k=n$, $l=1$ and $|A|=|G|$.
If $k=n-1$ and $|A|=|G|-1$,
the statement $(b)$ holds by a similar argument as in the proof of Lemma \ref{Elemmid}.

If $k=n$, $l=1$ and $|A|=|G|$,
then $|G|=(2n+1)(n+1)$, $A=\{(i, \,j):0\leq i\leq 2n,0\leq j\leq n\}$ and $f(A)=G$.
From this we have
$$-s=is+js'$$ and $$-s'=i_1s+j_1s'$$
where $(i, \,j)$, $(i_1, \,j_1)\in A$.
Now suppose  $(a)$ does not holds, then we must have $i=2n$ and $j_1=n$.
It follows that
\begin{gather}
(2n+1)s+js'=0  \label{eqa}
\end{gather}
and
\begin{gather}
i_1s+(n+1)s'=0.  \label{eqb}
\end{gather}
Since $|G|=(2n+1)(n+1)$,
we get $(n+1)js'=0$ and $(2n+1)i_1s=0$.
The results follows upon multiplying on both sides of Equations (\ref{eqa}) and (\ref{eqb}) by $n+1$ and $2n+1$, respectively.
Once again multiply on both sides of (\ref{eqa}) and (\ref{eqb}) by $i_1$ and $j$, respectively,
and we get $i_1js'=0$ and $i_1js=0$.
Since $s$ and $s'$ generate $G$, then for any $g\in G$, $i_1jg=0$
which says that $|G||i_1j$.
Since $0\leq i_1\leq 2n$ and $0\leq j\leq n$, we have $|G|=(2n+1)(n+1)\leq 2n^2$,
a contradiction. This proves the lemma.
\end{proof}
Applying the above two lemmas, we obtain
\begin{prop} \label{EOthm}
Let $n$, $k_1$ and $k_2$ be integers, $n\geq 2$, $0\leq k_1< k_2$.
If $[-k_1, \,k_2]^*$ splits an abelian group $G$ of order $n(k_1+k_2)+1$,
then $k_1\leq n-2$.
\end{prop}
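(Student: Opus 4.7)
The plan is to argue by contradiction. Assume $k_1 \geq n-1$, set $k := k_1$, and write $d = k_2 - k_1 \geq 1$. If $d = 2l$ with $l \geq 1$, we are in the setting of Lemma~\ref{Elemmid}; if $d = 2l+1$ with $l \geq 0$, we use Lemma~\ref{Olemmid}. Since $k \geq n-1$ the hypothesis of the relevant lemma is satisfied, and applying it to every pair of distinct splitter elements $s, s' \in S$ yields one of the alternatives (a) or (b) for that pair.

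To dispose of alternative (a): the relation $xs + ys' = 0$ rewrites as $xs = (-y)s'$. Since $|y| \leq k_1$, the coefficient $-y$ lies in $[-k_1, k_1]^* \subseteq M$, so $(-y)s'$ has unique splitting representation $(-y, s')$. If $x \in [1, k_2]$, then $x \in M$ and $(x, s)$ is a conflicting representation with $s \neq s'$, violating uniqueness; hence $x > k_2$. Combining with the upper bound $x \leq 2n+2l-3$ (resp.~$2n+2l-2$) and the contradiction hypothesis $k_1 \geq n-1$ forces $x = k_2 + r$ with $r \in [1, k_1 - 1]$. Substituting, the relation becomes $rs + ys' = -k_2 s$. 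Since $-k_2 \notin M$ (as $k_1 < k_2$), the unique splitting representation $(m, t)$ of $-k_2 s$ cannot nominally be $(-k_2, s)$; matching $(m, t)$ against the partial representations $(r, s)$ and $(y, s')$ on the left side, and using the bound $\mathrm{ord}(s) \geq k_1 + k_2 + 1$ (which holds because the $k_1 + k_2$ distinct nonzero elements $\{is : i \in M\}$ sit inside $\langle s \rangle$), a short case analysis yields the desired contradiction.

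To dispose of alternative (b): we have $s' = \pm c s$ with $c = 2n + 2l - 2$ (even case) or $c = 2n + 2l - 1$ (odd case), and $s$ generates $G$, so $G$ is cyclic. Only the two elements $\pm c s$ of $S \setminus \{s\}$ can be paired with $s$ via (b). For $n \geq 4$, some other pair is then forced to satisfy (a), reducing to the previous step. For the small residual cases, we check directly: substituting $s' = \pm c s$ into the splitting identity exhibits a collision of the form $c \cdot s \in iS$ for some $i \in M$, contradicting disjointness of the sets $iS$ over $i \in M$.

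The main obstacle is the second step of alternative (a): the derived relation $rs + ys' = -k_2 s$ is not an immediate uniqueness clash, and extracting the contradiction requires carefully enumerating the possible unique representations $(m, t)$ of $-k_2 s$ (separating the sub-cases $t = s$ and $t \neq s$) and ruling them out one at a time using the sharp upper bound on $x$ together with the order bound on $s$. This case analysis is the technical heart of the argument; the alternative-(b) step is routine by comparison.
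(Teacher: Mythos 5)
There is a genuine gap, and it sits exactly where you locate "the technical heart of the argument." After you correctly observe that alternative (a) forces $x>k_2$, you claim that the resulting relation $rs+ys'=-k_2s$ (with $x=k_2+r$, $r\in[1,k_1-1]$) can be refuted by "a short case analysis" of the unique representation of $-k_2s$. No such analysis is given, and none is available from the data of a single pair: the element $-k_2s$ has some representation $mt$ with $t\in S$ not necessarily in $\{s,s'\}$, and a three-term relation $rs+ys'=mt$ with small coefficients is perfectly compatible with the splitting axioms and the order bound $\mathrm{ord}(s)\ge k_1+k_2+1$. Even in the sub-case $t=s$ one only gets $\mathrm{ord}(s)\mid m+k_2$ with $m+k_2\le 2k_2$, which is not a contradiction. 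In short, a single pair $(s,s')$ may legitimately satisfy (a) with $x$ in the ``dead zone'' $(k_2,\,2n+2l-3]$, so no pair-local argument of the kind you sketch can close case (a).

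The paper avoids this by a global argument: fix one splitter element $s_1$ and apply the lemma to all $n-1$ pairs $(s_1,s_j)$, $j=2,\dots,n$. If every pair satisfies (a) with data $(x_j,y_j)$, then either two indices share the same $x_j$, giving the immediate clash $y_us_u=y_vs_v$, or the $x_j$ are $n-1$ distinct integers in $[1,2n+2l-3]$, so by pigeonhole the smallest satisfies $x_j\le n+2l-1\le k_2$ (here the hypothesis $k_1\ge n-1$ enters), and that single small $x_j$ produces the uniqueness clash. This cross-pair pigeonhole is the missing idea in your proposal. Your treatment of alternative (b) is also under-specified: you do not use the accompanying conclusions $k=n-1$ or $l=1$ from the lemma, which are what make the paper's computation $(k+1)(2n+2l-2)\equiv-1\pmod{|G|}$ (hence $(k+1)(\pm(2n+2l-2)s_1)=\mp s_1$, a clash) go through; and your reduction of (b) to (a) for $n\ge4$ inherits the gap above. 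As written, the proposal does not prove the proposition.
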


\begin{proof}
If $k_2-k_1>0$ is even, write $k=k_1$ with $k\geq 0$ and $2l=k_2-k_1$ with $l\geq 1$.
Then the order of $G$ is $|G|=2n(k+l)+1$.

If $n=2$ and  $k\geq 1$, let $S=\{s, \,s'\}$ be the splitter set.
The statement $(a)$ in Lemma \ref{Elemmid} implies that $2n+2l-3>k+2l$, which  contradicts with $k\geq 1$.
If  the statement $(b)$ in Lemma \ref{Elemmid} holds,
then $s'=\pm(2l+2)s$.
Since $(2l+2)\cdot s=\pm s'$,
we must have $k_2=k+2l<2l+2$, and so $k=1$.
For $k_2=k+2l\geq 2$ and $|G|=4(l+1)+1$, we can derive a contradiction from $2\cdot s'=\pm s$.

Now we assume that $n\geq 3$ and suppose $k\geq n-1$.
Let $S=\{s_1, \ldots,  \,s_n\}$ be a splitter set of $G$.
For each index $j$, $2\leq j\leq n$, consider the pair of elements $s_1$ and $s_j$.
Assume that for each such $j$, the statement $(a)$ in Lemma \ref{Elemmid} holds,
that is, there are $x_j$ and $y_j$, $1\leq x_j\leq 2n+2l-3$, $1\leq |y_j|\leq k$, such that
$x_js_1+y_js_j=0$.
If there are $u$, $v$ with $u\neq v$ in $[2, \,n]$ such that $x_u=x_v$,
then $y_us_u=y_vs_v$, a contradiction to the fact that $S$ is a splitter set.
If for any $2\leq u<v\leq n$, $x_u\neq x_v$,
say, $x_2<x_3<\cdots <x_n$,
then $1\leq x_2<x_3<\cdots <x_n\leq 2n+2l-3$, and then $x_2\leq n+2l-1$.
It follows that $-y_2s_2=x_2s_1$ with $-y_2$, $x_2\in [-k,k+2l]^*$, a contradiction.

Thus, there must be an index $j$ such that the statement $(b)$ in Lemma \ref{Elemmid} holds.
It means that $(2n+2l-2)s_1\in S$ or $-(2n+2l-2)s_1\in S$ and $k=n-1$ or $l=1$.
If $(2n+2l-2)s_1\in S$, then $(k+1)\cdot (2n+2l-2)s_1=2n(k+l)s_1=(-1)\cdot s_1$.
If $-(2n+2l-2)s_1\in S$, then $(k+1)\cdot (-(2n+2l-2)s_1)=-2n(k+l)s_1=(1)\cdot s_1$.
Both of above are impossible.

The case $k_2-k_1$ odd follows from Lemma \ref{Olemmid} and the same discussion as in the above proof. This proves the proposition.
\end{proof}

\begin{lem}\label{thmk2}
Let $n$, $k_1$ and $k_2$ be integers with $n\geq 2$, $k_2\geq k_1\geq 1$ and $k_2\geq n-1$.
Suppose that $[-k_1, \,k_2]^*$ splits the cyclic  group $G=\mathbb{Z}_{n(k_1+k_2)+1}$.
Then for any two distinct elements $s$ and $s'$ in a splitter set,  one of the following statements holds:
\begin{description}
  \item[(a)] there are integers $x$ and $y$ satisfying $1\leq x\leq n+k_1-2$, $1\leq y\leq k_2$ and $xs+ys'=0$;
  \item[(b)] $k_2\leq n+k_1-3$ or $k_1=1$, $k_2=n-1$ or $k_1=2$ and $k_2=n$.
\end{description}
\end{lem}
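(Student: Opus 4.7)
The plan is to adapt the double-counting argument from the proofs of Lemmas~\ref{Elemmid} and~\ref{Olemmid} to the asymmetric multiplier window $[-k_1,k_2]^{*}$. Define $f:\mathbb{Z}\oplus\mathbb{Z}\to G$ by $f(i,j)=is+js'$ and consider its restriction to the rectangle
\[
A=\{(i,j):0\le i\le n+k_1-2,\ 0\le j\le k_2\},
\]
of size $|A|=(n+k_1-1)(k_2+1)$. A direct expansion yields
\[
|A|-|G|=(k_1-1)(k_2-n)+(k_1-2),
\]
and the proof splits according to whether $f|_A$ is injective.

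If $f|_A$ is not injective, then $(i_1-i_2)s+(j_1-j_2)s'=0$ for distinct $(i_1,j_1),(i_2,j_2)\in A$, and one may take $i_1>i_2$. The degenerate subcases $i_1=i_2$ and $j_1=j_2$ are excluded using $\mathrm{ord}(s),\mathrm{ord}(s')\ge k_1+k_2+1$ together with $k_2\ge n-1$. If $j_1>j_2$, then $x=i_1-i_2$ and $y=j_1-j_2$ satisfy (a) directly. If $j_1<j_2$, setting $x=i_1-i_2\in[1,n+k_1-2]$ and $y'=j_2-j_1\in[1,k_2]$ gives $xs=y's'$; in the sub-subcase $x\le k_2$ both sides are valid $[-k_1,k_2]^{*}\cdot S$-representations, so uniqueness of the splitting forces $s=s'$, a contradiction; in the sub-subcase $x>k_2$ one concludes $k_2\le n+k_1-3$, which is the first clause of (b).

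If instead $f|_A$ is injective, then $|A|\le|G|$. Combined with $k_2\ge k_1\ge 1$ and $k_2\ge n-1$, the inequality $(k_1-1)(k_2-n)\le 2-k_1$ restricts $(k_1,k_2)$ to exactly three families: $k_1=1$ with $k_2\ge n-1$; $k_1=2$ with $k_2\in\{n-1,n\}$; or $k_1\ge 3$ with $k_2=n-1$. Each of these pairs meets some clause of (b) \emph{except} the configuration $k_1=1$, $k_2\ge n$, so the lemma is immediate in the other configurations.

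The residual case $k_1=1$, $k_2\ge n$ with $f|_A$ injective is where the principal difficulty lies. Here $|A|=|G|-1$, so $f(A)=G\setminus\{x_0\}$ for a unique $x_0\in G$. Following the missing-element trick from Lemma~\ref{Elemmid}, each of the four shifts $x_0\pm s$ and $x_0\pm s'$ lies in $f(A)$; the requirement that the further shift recovering $x_0$ leaves $A$ pins the representing index to a specific edge of $A$ (respectively $i=n-1$, $i=0$, $j=k_2$, and $j=0$), producing four explicit identities in $s,s',x_0$. Eliminating $x_0$ in pairs yields relations such as
\[
(n+1)s=\alpha s',\qquad \beta s=(k_2+2)s',\qquad (n-i_4)s+(j_1+1)s'=0,
\]
with $\alpha\in[-k_2,k_2]$, $\beta\in[-(n-1),n-1]$, $i_4\in[0,n-1]$, and $j_1\in[0,k_2]$. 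I would combine these identities (and their analogues from the remaining cross-pairings) to extract an $(x,y)$ with $1\le x\le n-1$, $1\le y\le k_2$ satisfying $xs+ys'=0$; the degenerate boundary configurations in which this fails should force $\mathrm{ord}(s)$ or $\mathrm{ord}(s')$ to divide a small positive integer, contradicting the bound $\mathrm{ord}(s),\mathrm{ord}(s')\ge k_2+2\ge n+2$, or else violate the uniqueness of the splitting $[-1,k_2]^{*}\cdot S$. Managing this boundary bookkeeping cleanly—carefully tracking all edge cases created by the asymmetry between $k_1=1$ and $k_2\ge n$—will be the main technical obstacle.
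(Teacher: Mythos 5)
Your overall strategy is the same as the paper's (a rectangle $A$ of size $(n+k_1-1)(k_2+1)$, the injective/non-injective dichotomy, and the missing-element trick when $|A|=|G|-1$), and the parts you carry out — the non-injective case, the computation $|A|-|G|=(k_1-1)(k_2-n)+(k_1-2)$, and the reduction of the injective case to the three families, of which only $k_1=1$, $k_2\ge n$ fails to land in (b) immediately — are correct. But the proof has a genuine gap exactly where the lemma's content lies: the residual case $k_1=1$, $k_2\ge n$ with $f|_A$ injective. There you only write down the four edge-pinned identities and then say you ``would combine'' them and that the degenerate boundary configurations ``should'' force a contradiction, explicitly flagging the bookkeeping as the main technical obstacle. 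That elimination is not routine hedging to be filled in later; it is the body of the argument, and without it the lemma is not proved.

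For comparison, the paper closes this case as follows (with its rectangle $-k_1\le i\le n-2$, $0\le j\le k_2$, a translate of yours). From $x=(n-1)s+j_1s'=-2s+j_2s'=i_1s+(k_2+1)s'=i_2s-s'$ one gets four relations, and for each of them the failure of (a) forces an extreme index: $(n-1-i_1)s=(k_2+1-j_1)s'$ gives $j_1=0$ or ($k_2=n-1$, $i_1=-1$); $(i_2+2)s=(j_2+1)s'$ gives $j_2=k_2$ or ($k_2=n-1$, $i_2=n-2$); $(i_1+2)s+(k_2+1-j_2)s'=0$ gives $i_1=n-2$ or $j_2=0$; and $(n-1-i_2)s+(j_1+1)s'=0$ gives $i_2=-1$ or $j_1=k_2$. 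Intersecting these constraints leaves only the two corner configurations $(j_1,j_2,i_1,i_2)=(0,k_2,n-2,-1)$, which yields $-s'=ns$ and hence $n\ge k_2+1$ (otherwise $ns$ and $-s'$ are two distinct representations of the same element), so $k_2=n-1$; and $(j_1,j_2,i_1,i_2)=(k_2,0,-1,n-2)$, which directly forces $k_2=n-1$. Either way clause (b) holds, and in your residual case $k_2\ge n$ this means (a) must hold. Note also that the correct outcome of the missing-element analysis is ``(a) or $k_2=n-1$,'' not ``(a) or an order/uniqueness contradiction'' as your sketch anticipates; the uniqueness argument enters only to convert $-s'=ns$ into $n\ge k_2+1$. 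You need to supply this case analysis (in your shifted coordinates, with the identities $(n+1)s=\alpha s'$, $\beta s=(k_2+2)s'$ and the two cross-pairings) to have a complete proof.
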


\begin{proof}
Define a map $f:$ $\mathbb{Z}\oplus \mathbb{Z}\rightarrow G$ by $f(i, \,j)=is+js'$.
Put $A=\{(i,j):-k_1\leq i\leq n-2,0\leq j\leq k_2\}$.

If $f|_A: A\rightarrow G$ is not an injective map,
then there are two distinct elements $(i_1,j_1)$, $(i_2,j_2)$ of $A$ such that
$i_1s+j_1s'=i_2s+j_2s'$,  $i_1$,  $i_2\in [-k_1,n-2]$, $j_1,  j_2\in [0,k_2]$. Note that $i_1=i_2$ implies $j_1s'=j_2s', j_1,  j_2\in [0,k_2]$, so $j_1=j_2$, a contradiction. If $j_1=j_2$, then $(i_1-i_2)s=0$, and hence $ord(s)\le |i_1-i_2|\le n+k_1-2\le k_1+k_2-1$, which is also impossible. Therefore we may assume  that $i_1<i_2$ and $j_1\neq j_2$. Thus, $(i_2-i_1)s+(j_2-j_1)s'=0$, where $1\leq i_2-i_1\leq n+k_1-2$ and $1\leq |j_2-j_1|\leq k_2$. If $1\leq j_2-j_1\leq k_2$, then statement $(a)$ holds. If $1\leq j_1-j_2\leq k_2$, then $(i_2-i_1)s=(j_1-j_2)s'$, which implies $k_2\leq n+k_1-3$.

Now suppose that $f|_A: A\rightarrow G$ is injective. Then $|A|=|f(A)|\leq |G|=n(k_1+k_2)+1$.
Since $|A|=(n-1+k_1)(k_2+1)=n(k_1+k_2)+(k_2+1-n)(k_1-1)$, $k_1\ge1$ and $k_2\ge n-1$, we must have that $k_2=n-1$ or $k_1=1$ or $k_1=2$, $k_2=n$.

If $k_1=1$ or $k_2=n-1$, then
 $|A|=|f(A)|=|G|-1$.
Put $$f(A)=G\setminus\{x\}.$$
Then $\{x-s,x+s,x-s',x+s'\}\subseteq G\setminus\{x\}$.
By the same arguments as in the proof Lemma \ref{Elemmid}, we have
$$x-s=f(n-2,j_1)=(n-2)s+j_1s';$$
$$x+s=f(-1,j_2)=-s+j_2s';$$
$$x-s'=f(i_1,k_2)=i_1s+k_2s';$$
$$x+s'=f(i_2,0)=i_2s,$$
where $i_1,$ $i_2\in [-1,n-2]$ and $j_1,$ $j_2\in [0,k_2]$. Hence
 $$x=(n-1)s+j_1s'=-2s+j_2s'=i_1s+(k_2+1)s'=i_2s-s'.$$
It follows from  $(n-1)s+j_1s'=i_1s+(k_2+1)s'$ that
$$(n-1-i_1)s=(k_2+1-j_1)s',$$
where $1\leq n-1-i_1\leq n$ and $1\leq k_2+1-j_1\leq k_2+1$.
If $1\leq k_2+1-j_1\leq k_2$ that is $j_1\in [1,k_2]$,
then $n-1-i_1>k_2$.
Since $k_2\geq n-1$, we obtain that $k_2=n-1$ and $i_1=-1$.
Hence,
$$j_1=0 \ or \ k_2=n-1, \ i_1=-1.$$
Similarly, $-2s+j_2s'=i_2s-s'$ implies $(i_2+2)s=(j_2+1)s'$
with $1\leq i_2+2\leq n$ and $1\leq j_2+1\leq k_2+1$.
Thus
$$j_2=k_2 \ or \ k_2=n-1, \ i_2=n-2.$$
By $-2s+j_2s'=i_1s+(k_2+1)s'$,
we get $(i_1+2)s+(k_2+1-j_2)s'=0$
with $1\leq i_1+2\leq n$ and $1\leq k_2+1-j_2\leq k_2+1$.
If the statement $(a)$ does not hold, then $i_1+2=n$ or $k_2+1-j_2=k_2+1$, that is
$$i_1=n-2 \ or \  j_2=0.$$
From $(n-1)s+j_1s'=i_2s-s'$, we have $(n-1-i_2)s+(j_1+1)s'=0$
with $1\leq n-1-i_2\leq n$ and $1\leq j_1+1\leq k_2+1$.
Hence the statement $(a)$ is not true yields $n-1-i_2=n$ or $j_1+1=k_2+1$, that is
$$i_2=-1 \ or \  j_1=k_2.$$
Hence, $(j_1,j_2,i_1,i_2)=(0,k_2,n-2,-1)$, $-s'=ns$ or $(j_1,j_2,i_1,i_2)=(k_2,0,-1,n-2)$, $k_2=n-1$.
If $-s'=ns$, then $n\geq k_2+1$.
By $k_2\geq n-1$, $k_2=n-1$.
This completes the proof.
\end{proof}

\begin{lem}{\rm (\cite{[PK]}Theorems 4.3-4.4) }\label{lemH3}
Let $p$ be an odd prime with $p\equiv 1$ (mod $4$). Then there exists a perfect $B[-1, \,3](p)$ set
if and only if $6$ is a quartic residue \textit{modulo} $p$, if $p\equiv 5$ (mod $8$);
$o(-\frac{3}{2})$ is odd and $4||<-1, \, 2, \,3>|$, if $p\equiv 1$ (mod $8$).
\end{lem}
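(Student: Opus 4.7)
The plan is to translate the existence of a perfect $B[-1,3](p)$ set into a multiplicative condition in $\mathbb{Z}_p^{\ast}$ and then argue by cases on $p\pmod 8$.

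Since $p$ is prime, $\mathbb{Z}_p^{\ast}$ is cyclic of order $p-1$, and I would analyze the splitting $\mathbb{Z}_p\setminus\{0\}=\{-1,1,2,3\}\cdot S$ multiplicatively. The natural first candidate for $S$ is a subgroup $H\le\mathbb{Z}_p^{\ast}$ of index $4$, and by a standard character-theoretic argument in cyclic groups such a splitting exists iff $M=\{-1,1,2,3\}$ is a complete set of coset representatives for some index-$4$ structure recoverable from $\mathbb{Z}_p^{\ast}$. Because $\mathbb{Z}_p^{\ast}$ is cyclic and $p\equiv 1\pmod 4$, its unique index-$4$ subgroup is the group of quartic residues; writing $\chi\colon\mathbb{Z}_p^{\ast}\to\mathbb{Z}_4$ for the associated quotient, the splitting condition in the tame case becomes: $\chi(-1),\chi(1),\chi(2),\chi(3)$ are pairwise distinct in $\mathbb{Z}_4$.

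For $p\equiv 5\pmod 8$ we have $v_2(p-1)=2$, so $-1=g^{(p-1)/2}$ satisfies $(p-1)/2\equiv 2\pmod 4$ and hence $\chi(-1)=2\in\mathbb{Z}_4$ is forced. The injectivity condition then simplifies to $\{\chi(2),\chi(3)\}=\{1,3\}$, i.e.\ $\chi(2)+\chi(3)=0$, i.e.\ $\chi(6)=0$; this says exactly that $6$ is a quartic residue modulo $p$, giving the first clause. The converse direction is immediate: if $6$ is a quartic residue and (as holds automatically for $p\equiv 5\pmod 8$) $2$ is a quadratic nonresidue, then $\chi(2)$ and $\chi(3)=\chi(6)-\chi(2)$ are the two order-$4$ elements of $\mathbb{Z}_4$, producing a transversal.

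For $p\equiv 1\pmod 8$, $-1$ is itself a quartic residue, so the quartic-residue quotient sends $-1\mapsto 0=\chi(1)$ and the naive coset argument collapses. Here I would work inside the subgroup $K=\langle -1,2,3\rangle\le\mathbb{Z}_p^{\ast}$ directly. For the multipliers to project bijectively onto a $\mathbb{Z}_4$-quotient, the image of $K$ in that quotient must be all of $\mathbb{Z}_4$, which forces the Sylow-$2$ part $K_2$ of the cyclic group $K$ to have order exactly $4$, i.e.\ $4\|\,|K|$. Writing $K=K_2\times K_{\mathrm{odd}}$, the restriction of the quotient to $K_2$ is one of two isomorphisms $K_2\cong\mathbb{Z}_4$; enumerating these and demanding that $-1,2,3$ land in the three nonzero classes of $\mathbb{Z}_4$ reduces to the single algebraic requirement $-3/2\in K_{\mathrm{odd}}$, that is, $o(-3/2)$ is odd. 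Conversely, given both conditions one explicitly builds a splitter set from $K_{\mathrm{odd}}$ together with a transversal of the Sylow-$2$ layer.

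The main obstacle is precisely the $p\equiv 1\pmod 8$ case: the quartic-residue quotient no longer separates $-1$ from $1$, and one must disentangle the Sylow-$2$ and odd-order parts of $\langle -1,2,3\rangle$ and then carefully track the coset positions of $-1,2,3$. The two stated conditions are exactly what pins down the coset configuration so that $\{-1,1,2,3\}$ forms a transversal, and translating a splitter set back from this structural data (rather than from a single subgroup of $\mathbb{Z}_p^{\ast}$) is the technical crux.
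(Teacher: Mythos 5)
First, a point of comparison: the paper does not prove this statement at all. Lemma \ref{lemH3} is quoted from Theorems 4.3--4.4 of \cite{[PK]}, so there is no in-paper proof to measure your attempt against; what follows assesses your argument on its own terms.

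Your case $p\equiv 5\pmod 8$ is essentially right, but the pivotal step --- that a splitting forces $M=\{-1,1,2,3\}$ to be a transversal of the index-$4$ subgroup of quartic residues --- is not ``a standard character-theoretic argument''; it needs the $k$-th power trick (Proposition 2.1 of this paper): because $|S|=(p-1)/4$ is \emph{odd} exactly when $p\equiv 5\pmod 8$, one may replace $M$ by $M^{(p-1)/4}$, conclude that $\{1,-1,2^{(p-1)/4},3^{(p-1)/4}\}$ is the whole Sylow $2$-subgroup, and read off $6^{(p-1)/4}=1$. The genuine gap is the case $p\equiv 1\pmod 8$, where $\gcd(|M|,|S|)>1$ and that coprimality argument is unavailable. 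There your whole scheme rests on the unproved equivalence ``the splitting exists iff $-1,1,2,3$ project bijectively onto a $\mathbb{Z}_4$-quotient,'' and neither direction is justified: a direct factor of an abelian group need not be a transversal of any subgroup, the unique $\mathbb{Z}_4$-quotient of $\mathbb{Z}_p^{\ast}$ sends $-1$ and $1$ to the same class (so no such transversal exists in the naive sense), and a surjection $\langle -1,2,3\rangle\to\mathbb{Z}_4$ --- which is what your conditions actually describe --- does not even extend to $\mathbb{Z}_p^{\ast}$ when $8\mid p-1$ and $4\,\|\,|\langle -1,2,3\rangle|$, so it neither follows from nor yields a factorization of the full group without further work. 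For sufficiency you defer the construction of the splitter set to ``the technical crux,'' and for necessity you give no mechanism at all; but those two items are precisely the content of the theorem. What you have is a correct heuristic explaining why the conditions $o(-3/2)$ odd and $4\,\|\,|\langle -1,2,3\rangle|$ arise, not a proof; the actual argument must be taken from \cite{[PK]}.
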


\begin{lem}{\rm (\cite{[H]}, Theorem 2.2.3)}\label{nonsingular}
Let $G$ be a finite group and $M$ a set of nonzero integers.
Then $M$ splits $G$ nonsingularly if and only if $M$ splits $\mathbb{Z}_p$ for each prime divisor $p$ of $|G|$.
\end{lem}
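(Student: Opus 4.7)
The plan is to use character theory of the finite abelian group $G$. (Although the lemma is stated for a finite group, any splitting by a set of integers acts through the abelianization, so one may take $G$ abelian; in the context of this paper $G$ is cyclic.) The central translation is that $G\setminus\{0\}=MS$ is equivalent to the identity
\[
\sum_{s\in S}P_M(\chi(s))=-1
\]
for every nontrivial character $\chi$ of $G$, where $P_M(T)=\sum_{m\in M}T^m$ is interpreted in a suitable cyclotomic extension of $\mathbb{Z}$. Nonsingularity, namely $\gcd(m,|G|)=1$ for every $m\in M$, makes the powers $\chi(s)^m$ well-defined.

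For the forward direction, I would fix a prime $p\mid|G|$ and isolate the nontrivial characters of $G$ that factor through the canonical surjection $G\to\mathbb{Z}_p$, i.e.\ those of order dividing $p$. For such $\chi$, the value $\chi(s)$ depends only on $s\bmod p$, so if I set $n_j=|\{s\in S:s\equiv j\pmod{p}\}|$, the identity above becomes
\[
\sum_{j=0}^{p-1}n_j\,P_M(\omega^{kj})=-1,\qquad k=1,\ldots,p-1,
\]
where $\omega$ is a primitive $p$-th root of unity. A discrete Fourier inversion on $\mathbb{Z}_p$, together with the divisibility $|M|\mid p-1$ (which follows from nonsingularity and the size equation), should then produce an honest splitter set $S'\subseteq\mathbb{Z}_p$ for $M$ out of the multiplicities $n_j$.

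For the reverse direction, I would first observe that $M$ splitting $\mathbb{Z}_p$ forces $\gcd(m,p)=1$ for every $m\in M$, so if this holds for all $p\mid|G|$ then the eventual splitting of $G$ is automatically nonsingular. To construct a splitter for $G=\mathbb{Z}_n$, handle each prime power $p^a$ dividing $n$ separately. When $p$ is odd, $(\mathbb{Z}/p^a\mathbb{Z})^{\ast}$ is cyclic and $|M|\mid p-1$, so the reduction map $(\mathbb{Z}/p^a\mathbb{Z})^{\ast}\to(\mathbb{Z}/p\mathbb{Z})^{\ast}$ induces an isomorphism on the quotients by $|M|$-th powers; hence any transversal of $M$ modulo $p$ is automatically a transversal modulo $p^a$. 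Layering transversals at each $p$-adic valuation stratum yields a splitter for $\mathbb{Z}_{p^a}$; the pieces are then combined via the Chinese Remainder Theorem.

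I foresee two main obstacles. First, the case $p=2$ with $a\geq 3$: here $(\mathbb{Z}/2^a\mathbb{Z})^{\ast}\cong\mathbb{Z}/2\oplus\mathbb{Z}/2^{a-2}$ is no longer cyclic, so the ``lift a transversal from mod $p$'' step requires a more careful two-generator analysis. Second, the CRT assembly is subtle because splitter sets cannot contain $0$, so a naive Cartesian product of splitters $S_{p^a}\subseteq\mathbb{Z}_{p^a}$ misses precisely those elements of $\mathbb{Z}_n$ whose image in some factor is zero; the splitter for $G$ must be assembled stratum by stratum along the divisor lattice of $n$, which I expect to be the technical heart of the argument.
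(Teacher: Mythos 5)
First, note that the paper does not prove this lemma at all: it is quoted verbatim from Hickerson (\cite{[H]}, Theorem 2.2.3), so there is no in-paper argument to match your sketch against; it must stand on its own, and as written both directions have essential gaps. In the forward direction, the characters of $G$ that factor through a surjection $\pi\colon G\to\mathbb{Z}_p$ carry exactly the pushforward information and nothing more: Fourier inversion of your identities $\sum_j n_j P_M(\omega^{kj})=-1$ yields precisely that the multiset $\{m\,\pi(s)\}$ covers every nonzero residue $|G|/p$ times and covers $0$ exactly $|G|/p-1$ times. Converting such a $t$-fold cover of $\mathbb{Z}_p^{\ast}$ by the reductions $\bar M$ into an exact splitting of $\mathbb{Z}_p$ (equivalently: showing the elements of $M$ are distinct and nonzero mod $p$ and that $\bar M$ is a direct factor of the multiplicative group $\mathbb{Z}_p^{\ast}$) is the whole content of this direction, not a routine inversion, and your sketch gives no mechanism for it; likewise the asserted divisibility $|M|\mid p-1$ does not follow from nonsingularity and the size equation (what the zero fiber gives is $|M|\mid |G|/p-1$). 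There is a much more elementary argument, adequate for the cyclic case the paper actually uses: since $\gcd(m,|G|)=1$, the elements $ms$ and $s$ have the same order, so every representation of an element of order $p$ uses a splitter element of order $p$; hence $M$ splits the $p$-torsion subgroup, which is $\mathbb{Z}_p$ when $G$ is cyclic (for noncyclic $G$ one still has to pass from $(\mathbb{Z}_p)^r$ to $\mathbb{Z}_p$, a step your outline does not address).

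In the reverse direction, the lifting mechanism you propose is misdirected: in a factorization $\mathbb{Z}_p^{\ast}=\bar M\cdot S'$ the set $\bar M$ is in general not a transversal of the subgroup of $|M|$-th powers, so ``any transversal of $M$ modulo $p$ is automatically a transversal modulo $p^a$'' is not the statement you need, and the two issues you defer ($p=2$ with $a\ge 3$, and the assembly along the divisor lattice) are precisely the substance of the proof, not loose ends. The clean route is: first observe that $M$ splitting $\mathbb{Z}_p$ for every $p\mid|G|$ forces $p\nmid m$ for all $m\in M$, so the eventual splitting is nonsingular and, for $G=\mathbb{Z}_n$, multiplication by $M$ preserves each stratum $\{x:\gcd(x,n)=d\}\cong(\mathbb{Z}/(n/d)\mathbb{Z})^{\ast}$; it therefore suffices that $M$ reduced mod $e$ be a direct factor of $(\mathbb{Z}/e\mathbb{Z})^{\ast}$ for every divisor $e>1$ of $n$, and this follows by lifting the factorization through the surjection $(\mathbb{Z}/e\mathbb{Z})^{\ast}\to(\mathbb{Z}/p\mathbb{Z})^{\ast}$ for any prime $p\mid e$, using only that $M$ maps injectively into $(\mathbb{Z}/p\mathbb{Z})^{\ast}$ (its residues mod $p$ are distinct because $M$ splits $\mathbb{Z}_p$); no cyclicity of unit groups, no special case at $p=2$, and no Chinese Remainder difficulty arise. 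Finally, the lemma is stated for an arbitrary finite group, whereas your construction is specific to cyclic $G$; if you intend only the cyclic case (which is all the paper uses), you should say so explicitly, and even then the forward half of your sketch needs to be replaced along the lines above.
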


For a finite group $G$, if the size of its splitter set $S$ is $2$, then it is easy to prove the following result.
\begin{lem} \label{cyclic-2}
Let $k_1$ and $k_2$ be integers with  $0\le k_1< k_2$ and let $M=[-k_1, \,k_2]^*$.
If $M$ splits an abelian  group $G$ of order $2|M|+1$,
then  $G$ is cyclic and $k_1=0$.
\end{lem}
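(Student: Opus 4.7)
The plan is to reduce the statement to Proposition 4.3 and then close with a short subgroup-order count. First, observe that $M = [-k_1, k_2]^*$ has exactly $k_1+k_2$ elements, whether $k_1 = 0$ or $k_1 \ge 1$, so the hypothesis $|G| = 2|M|+1$ reads $|G| = 2(k_1+k_2)+1 = n(k_1+k_2)+1$ with $n = 2$. Proposition 4.3 then applies verbatim and gives $k_1 \le n-2 = 0$, forcing $k_1 = 0$, which is half of the conclusion.

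Next, with $k_1 = 0$ I have $M = [1, k_2]$ and $|G| = 2k_2 + 1$. Writing $S = \{s, s'\}$, the splitting axiom forces the multiples $s, 2s, \ldots, k_2 s$ to be pairwise distinct nonzero elements of $G$: any collision $is = js$ with $i < j$ in $M$ would yield $(j-i)s = 0$ with $j-i \in M$, contradicting $Ms \subseteq G \setminus \{0\}$. Therefore the cyclic subgroup $\langle s \rangle$ contains the $k_2+1$ distinct elements $0, s, 2s, \ldots, k_2 s$, so $|\langle s \rangle| \ge k_2+1$.

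Finally I would use that $|G|$ is odd: its smallest prime divisor is at least $3$, so every proper subgroup has order at most $(2k_2+1)/3$. If $\langle s \rangle$ were proper I would get $k_2+1 \le (2k_2+1)/3$, that is, $k_2 \le -2$, which is impossible. Hence $\langle s \rangle = G$ and $G$ is cyclic.

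I do not anticipate a genuine obstacle: Proposition 4.3 does the heavy lifting, and the remaining verification is a one-line subgroup-order count enabled by the oddness of $|G|$. The only point requiring minimal care is the book-keeping that $|M| = k_1+k_2$ uniformly in $k_1$, so that Proposition 4.3 can be invoked with $n = 2$ without a case split on whether $k_1 = 0$.
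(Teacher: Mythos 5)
Your argument is correct, but it takes a genuinely different route from the paper's proof, essentially reversing the order of the two conclusions. The paper first proves cyclicity directly from the splitting with $S=\{g_1,g_2\}$: the element $g_1+g_2$ must be representable (or vanish), which forces one of $g_1,g_2$ into the cyclic subgroup generated by the other; only then, taking $G=\mathbb{Z}_{2|M|+1}$ and $S=\{1,a\}$, does it invoke Lemma \ref{thmk2} (which presupposes cyclicity) with $n=2$ to exclude $k_1\ge 1$. You instead obtain $k_1=0$ immediately from Proposition \ref{EOthm} with $n=2$ --- legitimate, since that proposition is stated for abelian groups and is proved before Lemma \ref{cyclic-2}, so there is no circularity --- and then prove cyclicity by a Lagrange-type count: $\langle s\rangle$ contains the $k_2+1$ distinct elements $0,s,\dots,k_2s$, while a proper subgroup of a group of odd order $2k_2+1$ has index at least $3$, hence at most $(2k_2+1)/3$ elements. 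That count is sound, and in fact it would work for any $k_1$, since uniqueness of representations already makes the $k_1+k_2$ elements $ms$, $m\in[-k_1,k_2]^*$, distinct and nonzero, so $|\langle s\rangle|\ge k_1+k_2+1>|G|/3$; so your cyclicity step is more elementary than the paper's two-generator trick. What the paper's route buys is independence from Proposition \ref{EOthm}, whose $n=2$ case rests on Lemmas \ref{Elemmid} and \ref{Olemmid} (nominally stated for cyclic groups, though their proofs only use the abelian structure), whereas your route avoids Lemma \ref{thmk2} entirely. Two small points of bookkeeping: the result you cite as ``Proposition 4.3'' is Proposition \ref{EOthm} (there is no Proposition 4.3 in the paper), and when you write $S=\{s,s'\}$ you should note that $|S|=2$ follows from $|G|-1=|M|\,|S|$, although your argument only ever uses one element $s\in S$, so nothing breaks.
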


\begin{proof}
Let $S=\{g_1, \,g_2\}$ be  a splitter set,   then $G\setminus \{0\}=[-k_1, \,k_2]^\ast\cdot \{g_1, \,g_2\}$.
Suppose  $G$ is not a cyclic group, we see that $g_1+g_2\not\in [-k_1, \,k_2]^\ast\cdot \{g_1, \,g_2\}$, a contradiction.

Since $G$ is a cyclic group, without loss of generality, we may assume that $G=\mathbb{Z}_{2|M|+1}$ and  $S=\{1, \,a\}$ is a splitter set. If $k_1>0$, then the statement $(a)$ in Lemma \ref{thmk2} says that there are integers $x$ and $y$  such that $1\le x\le n+k_1-2=k_1$, $1\le y\le k_2$ and $x+ya=0$, that is $y=-x\cdot a$ with $-x, \,y\in [-k_1,k_2]^\ast$,  which is impossible.

By the statement $(b)$ in Lemma \ref{thmk2}, we have $k_2\leq k_1-1$ or $k_1=k_2=1$ or $k_1=k_2=2$. This contradicts with $k_1<k_2$. Therefore $k_1=0$ and $M=[1, k_2]$.

\end{proof}

\begin{prop} \label{thmin1}
Let $n$, $k_1$ and $k_2$ be integers with $n\geq 2$ and $1\leq k_1< k_2$.
If $[-k_1,k_2]^*$ splits a cyclic group $G$ of order $n(k_1+k_2)+1$,
then $k_2\leq 2n-5$.
\end{prop}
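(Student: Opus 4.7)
The plan is to combine Proposition \ref{EOthm} (which gives $k_1 \le n-2$, hence $n \ge 3$) with a pigeonhole argument based on Lemma \ref{thmk2}, applied to the pairs $(s_1, s_j)$ drawn from the splitter set $S = \{s_1, \ldots, s_n\}$. If $k_2 \le n-2$, then the bound $k_2 \le 2n-5$ follows at once from $n \ge 3$, so I would assume $k_2 \ge n-1$ and thereby enter the hypothesis of Lemma \ref{thmk2}.

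For each $j = 2, \ldots, n$, Lemma \ref{thmk2} forces one of its two alternatives. Suppose case (a) holds for every such $j$, producing integers with $1 \le x_j \le n+k_1-2$, $1 \le y_j \le k_2$, and $x_j s_1 + y_j s_j = 0$. I would first sharpen the range to $x_j \in [k_1+1, n+k_1-2]$: if $x_j \le k_1$ then $-x_j \in M$, and $(-x_j) s_1 = y_j s_j$ would exhibit two distinct representations (with $s_1 \ne s_j$) of the same nonzero element (nonzero because $|M s_j| = |M|$ forces $\mathrm{ord}(s_j) > k_1 + k_2 \ge y_j$), contradicting uniqueness of the splitting. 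A parallel argument shows the $x_j$'s are pairwise distinct (else $y_u s_u = y_v s_v$ for some $u \ne v$). Hence $n-1$ distinct integers would be forced into an interval of only $n-2$ values, a contradiction, so case (b) of Lemma \ref{thmk2} must hold for at least one pair.

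Case (b) offers three alternatives: $k_2 \le n+k_1-3$, which yields $k_2 \le 2n-5$ directly from $k_1 \le n-2$; $k_1 = 1$, $k_2 = n-1$, which gives $k_2 \le 2n-5$ whenever $n \ge 4$; and $k_1 = 2$, $k_2 = n$, which gives $k_2 \le 2n-5$ whenever $n \ge 5$. The residual exceptional triples are therefore $(n,k_1,k_2) = (3,1,2)$ and $(4,2,4)$ (the remaining small $n$'s being excluded by Proposition \ref{EOthm}), corresponding to putative splittings of $\mathbb{Z}_{10}$ by $[-1,2]^*$ and of $\mathbb{Z}_{25}$ by $[-2,4]^*$. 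The main obstacle of the plan is eliminating these two cases by direct inspection. For $\mathbb{Z}_{10}$: the element $5$ is unreachable, since $\pm 1 \cdot s \equiv 5 \pmod{10}$ forces $s = 5$ (and then $2 \cdot 5 \equiv 0$ violates the splitting axiom), while $2s \equiv 5 \pmod{10}$ has no solution. For $\mathbb{Z}_{25}$: any $s$ with $5 \mid s$ satisfies $3s \equiv -2s \pmod{25}$, so $sM$ is non-injective and such $s$ cannot lie in $S$; thus every $s \in S$ is coprime to $25$, and then no multiple of $5$ can equal $ms$ with $m \in [-2,4]^*$, since no element of $[-2,4]^*$ is divisible by $5$.
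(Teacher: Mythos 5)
Your proof is correct, and its skeleton coincides with the paper's: Proposition \ref{EOthm} to get $k_1\le n-2$ (and dispose of $n=2$ and of $k_2\le n-2$), Lemma \ref{thmk2} applied to the pairs $(s_1,s_j)$, the pigeonhole count on the coefficients $x_j$ to rule out alternative (a) for all $j$ (your sharpening $x_j\ge k_1+1$ is exactly the paper's observation, phrased there as ``the smallest $x_j$ is at most $k_1$, giving $(-x)s_1=ys_j$ with both coefficients in $M$''), and then the case analysis of alternative (b). Where you genuinely diverge is in the endgame. First, you notice that for $n=4$ the pair $(k_1,k_2)=(1,3)$ already satisfies $k_2\le 2n-5=3$, so the paper's appeal to Lemma \ref{lemH3} (nonexistence of a perfect $B[-1,3](17)$ set) is not needed at all; your bookkeeping eliminates that dependency. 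Second, for the two residual triples $(n,k_1,k_2)=(3,1,2)$ and $(4,2,4)$ you give direct elementary eliminations (the element $5$ is unreachable in $\mathbb{Z}_{10}$; in $\mathbb{Z}_{25}$ every splitter element must be prime to $5$, so multiples of $5$ are unreachable), whereas the paper invokes Lemma \ref{nonsingular} to descend to $\mathbb{Z}_5$. Your route is preferable at the $\mathbb{Z}_{10}$ step: the putative splitting of $\mathbb{Z}_{10}$ by $[-1,2]^*$ would be singular (since $\gcd(2,10)>1$), so Hickerson's nonsingularity criterion does not apply verbatim there, and your hands-on argument closes that case without any such hypothesis. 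In short: same key lemmas and the same pigeonhole mechanism, but a leaner and, in one spot, more watertight treatment of the exceptional small cases.
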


\begin{proof}
If $n=2$, the result follows immediately from Lemma \ref{cyclic-2}.

For $n\geq 3$ and  $k_2\geq n-1$, following  the argument in the proof of Proposition \ref{EOthm}, we see that the statement $(a)$ in Lemma \ref{thmk2} does not hold.

Suppose that the statement $(b)$ in Lemma \ref{thmk2} holds.
Then $k_2\leq n+k_1-3$ or $k_1=1$, $k_2=n-1$ or $k_1=2$, $k_2=n$.
By Proposition \ref{EOthm}, $k_2\leq n+k_1-3$ implies $k_2\leq 2n-5$.
For $(k_1, \,k_2)=(1, \,n-1)$ or $(2, \,n)$,
if $n\geq 5$, it is easily seen that $k_2\leq 2n-5$.
If $n=4$, then we infer that $[-1, \,3]^*$ splits the cyclic group $\mathbb{Z}_{17}$
or $[-2, \,4]^\ast$ splits an abelian group $G$ of order $25$.
However, Lemma \ref{lemH3} yields that $[-1, \,3]^*$ can not split $\mathbb{Z}_{17}$.
In addition, Lemma \ref{nonsingular} yields that $[-2, \,4]^*$ splits the cyclic group $\mathbb{Z}_5$, which is impossible.

If $n=3$, by Proposition \ref{EOthm}, $k_1\leq n-2$ implies $(k_1, \,k_2)=(1, \,2)$.
It follows from the proof of Lemma \ref{thmk2} that $|G|=n(k_1+k_2)+1=10$, so $[-1, 2]^\ast$ splits the cyclic group $\mathbb{Z}_5$ by Lemma \ref{nonsingular}, which is impossible.
This completes the proof.
\end{proof}

\textit{The proofs of Theorem \ref{thm1}:} The proof follows immediately from Propositions \ref{EOthm} and \ref{thmin1}.

\section{Proofs of Theorems 1.3-1.5}

In this section, we will prove Theorems 1.3, 1.4 and 1.5. We first prove Theorem 1.3.

{\bf Proof of Theorem \ref{primeB}:}
Since $\gcd(s,m)=1$ for all $s\in S$ and the splitting is purely singular,  then for any prime $p|m$ we have $p\le k_2$,  and for any $d|m, \, 1<d<m$,
$$\{g\in \mathbb{Z}_{m}\setminus \{0\}: d|g\}=M_{d}\cdot S, $$
where $M_d=\{i\in [-k_1, \,k_2]^\ast: d|i\}$.
Hence $d\le k_2$ and $|\{g\in \mathbb{Z}_{m}\setminus \{0\}: d|g\}|=\frac{m}{d}-1=|M_{d}|\cdot |S|=([\frac{k_1}{d}]+[\frac{k_2}{d}])\cdot |S|$.
Let $k_1=u_1d+v_1, v_1\in [0, d-1]$ and $k_2=u_2d+v_2, v_2\in [0, \,d-1]$, then $u_1=[\frac{k_1}{d}]$ and $u_2=[\frac{k_2}{d}]$.
Form $m=(k_1+k_2)|S|+1$, we have
$$(u_1+u_2)|S|=\frac{((u_1+u_2)d+(v_1+v_2))|S|+1}{d}-1=(u_1+u_2)|S|+\frac{(v_1+v_2)|S|+1}{d}-1.$$
Since $d\le k_2$, we see that $$(v_1+v_2)|S|=d-1\leq k_2-1,$$ which implies that $v_1+v_2\geq 1.$

If $k_1=k_2$, then $|S|\geq 2$ and Lemma \ref{cross} implies $k_2\leq |S|-1$.
Hence $|S|\leq (v_1+v_2)|S|=d-1\leq k_2-1\leq |S|-2$, a contradiction.
Therefore $|S|=1$ and $m=k_1+k_2+1$.  If $k_1=0$ and $|S|\geq 3$, then $k_2\leq |S|-2$ by Lemma \ref{semi-cross}.
Hence $|S|\leq (v_1+v_2)|S|\le k_2-1\leq |S|-3$, again a contradiction. If $k_1=0$ and $|S|=2$, it follows that $m=2k_2+1$.

If $1\leq k_1<k_2$ and $|S|\geq 2$, by Proposition \ref{thmin1} we have $k_2\leq 2|S|-5$.
Hence $$|S|\leq (v_1+v_2)|S|=d-1\leq k_2-1\leq 2|S|-6.$$
It follows that $v_1+v_2=1$ and $|S|=d-1$ for any $d|m, 1<d<m$. This means that $m$ has only one positive divisor other than $1$ and $m$, so $m=p^2$ for some prime $p$. In this case, we have $k_1=1$, $k_2=p$ and $|S|=p-1$,
$$pS=\{p\in\mathbb{Z}_{p^2}\setminus\{0\}, p|g\}=\{ip\pmod{p^2}, i\in[1, p-1]\}.$$
Hence we may assume that $$S=\{l_ip+i|1\leq i\leq p-1 \ and \ 1\leq l_i\leq p\}.$$
Let $$l_{p-1}=p-j$$ for some $j\in[1,p]$. If $j=1$, then $l_{p-1}p+p-1\equiv-1\pmod{p^2}\in S$, a contradiction. If $j\in[2, p-1]$, let $y$ be the least positive integer modulo $p$ such that $(p-1)y\equiv1\pmod{p}$,  then $(l_{p-1}p+p-1)(p-y)\equiv (j-1)yp-p+y\equiv y \pmod{p^2}$. This means $y$ has two different representations in $[-1, \,p]^\ast\cdot S$, again a contradiction. This completes the proof.
\qed

To prove Theorem 1.4, we also need the following result for the splittings of cyclic groups.

\begin{lem}[\cite{[SS]}, Theorem 3.2] \label{p-group-sp}
If $p$ is an odd prime and $\mathbb{Z}_{p^{\alpha}}\setminus \{0\}=MS$ is a splitting, then
either $M$ or $S$ contains only elements relatively prime to $p$.
\end{lem}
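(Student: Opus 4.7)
The plan is to exploit the $p$-adic valuation structure of $G = \mathbb{Z}_{p^{\alpha}}$. For each $i \in \{0, 1, \ldots, \alpha - 1\}$, set $M_i = \{m \in M : v_p(m) = i\}$ and $S_j = \{s \in S : v_p(s) = j\}$, with cardinalities $a_i = |M_i|$ and $b_j = |S_j|$. As a preliminary step I would verify $a_0, b_0 \geq 1$: if every $m \in M$ were divisible by $p$, then $MS \subseteq pG$, contradicting $MS = G \setminus \{0\}$, and the same applies to $S$. The lemma then reduces to showing that either $a_i = 0$ for all $i \geq 1$, or $b_j = 0$ for all $j \geq 1$.

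Next, I would derive the fundamental counting identities. Partitioning $G \setminus \{0\}$ by $v_p$-value (each element with $v_p = k$ corresponds uniquely to a pair $(m,s)$ with $v_p(m) + v_p(s) = k$) gives
$$\sum_{i+j=k} a_i b_j \;=\; (p-1)\, p^{\alpha-1-k}, \qquad k = 0, 1, \ldots, \alpha-1.$$
Comparing $\sum_{i,j} a_i b_j = |M||S| = p^{\alpha} - 1$ with the sum of these identities forces $a_i b_j = 0$ whenever $i + j \geq \alpha$. Hence the generating polynomials $A(x) = \sum_i a_i x^i$ and $B(x) = \sum_j b_j x^j$ satisfy the exact identity
$$A(x)\, B(x) \;=\; (p-1)\cdot\frac{p^{\alpha} - x^{\alpha}}{p - x} \;=\; (p-1) \prod_{\substack{d \mid \alpha \\ d > 1}} p^{\varphi(d)} \Phi_d(x/p)$$
in $\mathbb{Z}[x]$, where each $p^{\varphi(d)} \Phi_d(x/p)$ (the homogenization of the $d$-th cyclotomic polynomial) is irreducible.

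Assume for contradiction that $\{m \in M : p \mid m\}$ and $\{s \in S : p \mid s\}$ are both non-empty, so that $A$ and $B$ are both non-constant. By unique factorization in $\mathbb{Z}[x]$, each of $A$, $B$ absorbs some of the irreducible factors, with positive constants multiplying to $p-1$. The key extra ingredient is the \emph{per-level multiplicative structure} of the splitting: for each $k$, the map $(m, s) \mapsto m s / p^{k}$ identifies $\bigsqcup_{i+j=k} M_i \times S_j$ with the cyclic unit group $(\mathbb{Z}/p^{\alpha-k}\mathbb{Z})^{\ast}$. Specializing to $k = 0$ gives an honest group factorization $M_0 \cdot S_0 = (\mathbb{Z}/p^{\alpha}\mathbb{Z})^{\ast}$; the higher $k$ impose compatibility conditions on the reductions of the $M_i$ and $S_j$ modulo the various $p^{\alpha-k}$. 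Combining these per-level factorizations with coefficient non-negativity, the identity $a_0 b_0 = (p-1) p^{\alpha-1}$, and the oddness of $p$ should force one of $A$ or $B$ to be a constant.

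The main obstacle is this last step: the polynomial identity and non-negativity alone do \emph{not} suffice, since for $\alpha$ with several divisors one can split the cyclotomic factors between $A$ and $B$ while keeping all coefficients non-negative (for example, when $\alpha = 6$ one could take $A = c_1(x + p)$ and $B = c_2(x^4 + p^2 x^2 + p^4)$ with $c_1 c_2 = p - 1$). The genuine contradiction must therefore come from the multiplicative factorizations at all levels $k$ simultaneously, most plausibly by invoking a Rédei–Hajós-type structural theorem on factorizations of the cyclic unit group $(\mathbb{Z}/p^{\alpha}\mathbb{Z})^{\ast}$ together with the tower of compatibility relations linking $M_0$ to the higher $M_i$; this is where the real technical work in \cite{[SS]} lives.
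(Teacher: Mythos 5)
The paper does not prove this lemma at all; it is quoted verbatim from Szab\'o \cite{[SS]} (Theorem 3.2), so there is no internal proof to compare against. Judged on its own terms, your proposal has a genuine and self-acknowledged gap: everything up to the polynomial identity $A(x)B(x)=(p-1)\frac{p^{\alpha}-x^{\alpha}}{p-x}$ is correct (and the vanishing $a_ib_j=0$ for $i+j\ge\alpha$ follows even more directly, since such a pair would represent $0$), and you rightly observe that non-negativity of coefficients does not preclude both $A$ and $B$ being non-constant. But the step that actually proves the lemma --- extracting a contradiction from the level-$k$ factorizations of $(\mathbb{Z}/p^{\alpha-k}\mathbb{Z})^{\ast}$ --- is only gestured at (``should force'', ``most plausibly by invoking a R\'edei--Hajós-type structural theorem''). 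No specific structural theorem is named, no compatibility relation between $M_0$ and the higher $M_i$ is written down, and no contradiction is derived. That is the entire content of the lemma.

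A concrete symptom that the missing step is where all the difficulty lives: the statement is \emph{false} for $p=2$, yet nothing in your argument up to and including the polynomial identity uses $p$ odd. For instance, $[-1,2]^{\ast}$ splits $\mathbb{Z}_{16}$ (Schwartz's purely singular $B[-1,2](4^{l})$ sets, cited in Section 3); there $A(x)=2+x$ and $B(x)=4+x^{2}$, both non-constant, and both $M$ and $S$ contain even elements --- fully consistent with your counting identities and per-level factorizations. So any correct completion must invoke the oddness of $p$ in an essential, located way, and your proposal mentions it only as one item in a list of ingredients without ever using it. Until you exhibit the precise mechanism by which $p$ odd kills the configuration $a_i>0$ for some $i\ge1$ and $b_j>0$ for some $j\ge1$, the proof is incomplete.
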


\begin{coro} \label{cyclic-p-gr}
Let $\alpha$, $k_1$, $k_2$ be integers, $\alpha \geq 2$, $0\leq k_1\leq k_2$ and $k_2\ge3$,
and let $p_0$, $p$, $q$ be primes, $ p_0\ne2$.
Suppose that $[-k_1, \,k_2]^*$ splits a cyclic group $\mathbb{Z}_{m}$.
If the splitting is purely singular and $m=p_0^{\alpha}$ or $pq$,
then either $m=k_1+k_2+1$ or $k_1=0$  and $m=2k_2+1$.
\end{coro}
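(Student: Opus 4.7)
The plan is to reduce both sub-cases of the corollary to Theorem \ref{primeB} by proving that $\gcd(s,m)=1$ for every $s \in S$. Once that coprimality is in hand, Theorem \ref{primeB} immediately delivers the dichotomy $m=k_1+k_2+1$, or $k_1=0$ together with $m=2k_2+1$, which is exactly the conclusion. So the entire task boils down to verifying the coprimality of every splitter with $m$ in each of the two shapes $m=p_0^{\alpha}$ and $m=pq$.

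For $m=p_0^{\alpha}$ with $p_0$ an odd prime and $\alpha\geq 2$, I would invoke Lemma \ref{p-group-sp} directly: exactly one of $M$ or $S$ must consist entirely of elements coprime to $p_0$. Since the splitting is purely singular, $M$ contains an element divisible by $p_0$, so the coprime set must be $S$. Because $m$ is a power of $p_0$, this is equivalent to $\gcd(s,m)=1$ for all $s\in S$, and Theorem \ref{primeB} closes the case.

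For $m=pq$ I would argue directly. The coincident case $p=q$ gives $m=p^{2}$: if $p$ is odd this is already the previous case; if $p=q=2$, then $m=4$, and the constraints $k_{2}\geq 3$ together with $k_{2}<m$ (otherwise $m\in M$ would send every $ms$ to $0$) force $k_{2}=3$, whence $|M|\cdot|S|=3$ yields $k_{1}=0$, $|S|=1$, and $m=k_{1}+k_{2}+1$ directly. So assume $p\neq q$. Pure singularity forces $p,q\leq k_{2}$, so both $p$ and $q$ lie in $M$. Suppose some $s\in S$ were divisible by $p$; writing $s\equiv pj\pmod{pq}$ with $1\leq j\leq q-1$ gives $\gcd(j,q)=1$, so $s$ has order exactly $q$ in $\mathbb{Z}_{pq}$ and hence $qs\equiv 0\pmod{m}$. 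Since $q\in M$ and $s\in S$, this contradicts the splitting property that $MS$ represents only nonzero elements. The same argument rules out $q\mid s$, so $\gcd(s,m)=1$ for every $s\in S$, and Theorem \ref{primeB} again applies.

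The proof is largely mechanical once the three ingredients---Lemma \ref{p-group-sp}, the short order-of-element computation in $\mathbb{Z}_{pq}$, and Theorem \ref{primeB}---are lined up. The only genuine subtlety is the degenerate case $m=4$, which falls outside the hypothesis of Lemma \ref{p-group-sp} (which requires an odd prime); fortunately the standing assumption $k_{2}\geq 3$ trivializes it by a direct counting argument.
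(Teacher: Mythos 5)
Your proposal is correct and follows essentially the same route as the paper: reduce both cases to Theorem \ref{primeB} by showing $\gcd(s,m)=1$ for all $s\in S$, using Lemma \ref{p-group-sp} for $m=p_0^{\alpha}$ and, for $m=pq$, the fact that pure singularity puts multiples of $p$ and $q$ in $M$, so a splitter divisible by $p$ or $q$ would put $0$ in $MS$. Your separate treatment of $p=q$ (in particular $m=4$) is a harmless extra precaution; the paper's argument for $m=pq$ covers it implicitly.
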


Let $S$ be the splitter set. By Theorem \ref{primeB}, it suffices to show that $\gcd(s, \, m)=1$ for all $s\in S$.
If $m=p_0^{\alpha}$, since the splitting is purely singular, so
it follows from Lemma \ref{p-group-sp} that $gcd(s,m)=1$ for all $s\in S$.

Put $$M_{\ell}=\{k\in [-k_1,k_2]^*: \ell|k\}, \quad S_{\ell}=\{s\in S: \ell|s\}.$$
If $m=pq$, since the splitting is purely singular,  so we have $|M_p|>0$ and $|M_q|>0$.
Hence $|S_p|=|S_q|=0$,
for otherwise $0$ is contained in $[-k_1,k_2]^*\cdot S=\mathbb{Z}_{m}\setminus \{0\}$. It follows that $\gcd(s, \, m)=1$ for all $s\in S$.
This completes the proof.
\qed

{\bf Proof of Theorem 1.4:}

For $\alpha=0$, the result follows immediately from  Corollary \ref{cyclic-p-gr}.

If $\alpha>0$, since $m\equiv1\pmod{k_1+k_2}$, so $k_1+k_2$ is odd and $2|k_1+k_2+1$.
It follows that $2|(k_1+k_2+1,\,m)>1$.
By Lemma \ref{gen-pk-lem}, we obtain that $k_1 + k_2 + 1|m$ and $\gcd(k_1 + k_2 + 1, \frac{m}{k_1+k_2+1}) = 1$.
Thus  $2^{\alpha}|k_1 + k_2 + 1$ and $\frac{m}{k_1+k_2+1}=p^\gamma$ or $p_1p_2$. Recall that $[-k_1, \,k_2]^\ast$ splits both $\mathbb{Z}_{m}$ and $\mathbb{Z}_{k_1+k_2+1}$. It follows from Lemma 2.3 that $[-k_1, \,k_2]^\ast$ splits $\mathbb{Z}_{\frac{m}{k_1+k_2+1}}$, so $\frac{m}{k_1+k_2+1}=k_1+k_2+1$ by Corollary \ref{cyclic-p-gr}, which contradicts with Lemma \ref{gen-pk-lem}.
This completes the proof of Theorem 1.4.
\qed

To prove Theorem 1.5, we need some other results. The following result follows immediately from a similar argument as in Lemma 15 \cite{[ZG]}.  We have
\begin{lem}\label{BjiaoM}
Let $n$, $k_1$ and $k_2$ be positive integers with $0\leq k_1< k_2$.
Suppose $B$ is a perfect $B[-k_1, \,k_2](n)$ set.
Set $\mathbb{Z}'_n=\{i:i\in \mathbb{Z}_n, \,\gcd(i, \,n)=1\}$,
$M(n)=\{i: i\in [-k_1, \,k_2]^*, \,\gcd(i,n)=1\}$
and $B(n)=\{i: i\in B, \,\gcd(i, \,n)=1\}$.
Then $\mathbb{Z}'_n=M(n)\cdot B(n)$.
\end{lem}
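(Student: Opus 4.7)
The plan is to exploit directly the unique-representation structure of the splitting. By the Hickerson--Schwarz remark in Section~1, the hypothesis that $B$ is a perfect $B[-k_1,k_2](n)$ set is equivalent to $M\cdot B$ being a splitting of $\mathbb{Z}_n$, where $M=[-k_1,k_2]^\ast$; that is, every $g\in\mathbb{Z}_n\setminus\{0\}$ has a unique representation $g=mb$ with $m\in M$ and $b\in B$. I want to show that restricting to the factors coprime to $n$ on both sides produces exactly the units $\mathbb{Z}'_n$, still with unique representations.

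First I would check the easy inclusion $M(n)\cdot B(n)\subseteq \mathbb{Z}'_n$: if $m\in M(n)$ and $b\in B(n)$, then $\gcd(m,n)=\gcd(b,n)=1$, so $\gcd(mb,n)=1$; in particular $mb\not\equiv 0\pmod n$, so $mb\in\mathbb{Z}'_n$. Moreover, uniqueness of such representations in $\mathbb{Z}'_n$ is inherited from the full splitting, since $M(n)\subseteq M$ and $B(n)\subseteq B$: two equal products $m_1b_1=m_2b_2$ in $\mathbb{Z}'_n$ are, a fortiori, equal products in $\mathbb{Z}_n\setminus\{0\}$, so $m_1=m_2$ and $b_1=b_2$.

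For the reverse inclusion, take any $g\in\mathbb{Z}'_n$. Then $g\neq 0$ in $\mathbb{Z}_n$, so the splitting produces a unique $(m,b)\in M\times B$ with $g=mb$. The key observation is that coprimality to $n$ passes from the product to each factor: if some prime $p\mid n$ divided $m$, then $p\mid mb=g$, contradicting $\gcd(g,n)=1$; the same argument applied to $b$ shows $\gcd(b,n)=1$. Thus $m\in M(n)$ and $b\in B(n)$, giving $g\in M(n)\cdot B(n)$.

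The proof is essentially formal and I do not anticipate any real obstacle; the one step worth highlighting is the prime-by-prime extraction that forces both $m$ and $b$ to be coprime to $n$ whenever their product is. This is exactly the mechanism that isolates the ``units part'' of the original splitting, matching the method used in Lemma~15 of \cite{[ZG]}.
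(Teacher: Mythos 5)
Your proof is correct: the coprimality extraction in both directions and the inheritance of uniqueness from the full splitting are exactly what is needed, and the paper itself gives no more than a pointer to the analogous argument in Lemma 15 of \cite{[ZG]}, which is this same restriction-to-units argument. So your proposal matches the intended proof in substance.
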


We also need the following result.
\begin{prop}\label{po1}Let $G$ be an abelian group (written multiplicatively)  with $|G|=2m, m\in\mathbb{N}$. Suppose $N$ is a subset of $G$ such that
$\{1, a\}\subseteq N$, $a\ne1, a^2=1$, where $1$ denote the unity of the group $G$,  and $|N|$ is odd, then $N$ is not a direct factor of $G$. \end{prop}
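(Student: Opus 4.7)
The approach is to proceed by contradiction using Proposition 2.1. Suppose $N$ is a direct factor of $G$, so that $G = N \cdot B$ is a factorization for some $B \subseteq G$; in particular $|N| \cdot |B| = |G|$. The plan is to replace $N$ by the set $N^{2} = \{n^{2} : n \in N\}$ and derive a cardinality contradiction.

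Since $|N|$ is odd, we have $\gcd(2, |N|) = 1$, so Proposition 2.1 applied with $k = 2$ gives that $G = N^{2} \cdot B$ is also a factorization of $G$. Being a factorization, this forces $|N^{2}| \cdot |B| = |G| = |N| \cdot |B|$, hence $|N^{2}| = |N|$; equivalently, the squaring map $n \mapsto n^{2}$ must be injective on $N$. The final step is to violate this injectivity: because $\{1, a\} \subseteq N$ with $a \neq 1$ and $a^{2} = 1 = 1^{2}$, squaring collapses two distinct elements of $N$ to the same image, so $|N^{2}| \leq |N| - 1 < |N|$, contradicting $|N^{2}| = |N|$. Hence no such $B$ can exist, and $N$ is not a direct factor.

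I do not anticipate any real obstacle. The entire substance of the argument is that the identity $1$ and the involution $a$ are identified by the squaring map, while Proposition 2.1 forces that map to preserve cardinality on a direct factor of odd size. The even order of $G$ enters only implicitly, through the existence of the element $a$ of order exactly $2$; no further use of the hypothesis $|G| = 2m$ is needed, and no delicate estimate or case analysis is required.
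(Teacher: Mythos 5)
Your proposal is correct and follows exactly the paper's own argument: assume a factorization $G=N\cdot B$, apply Proposition 2.1 with $k=2$ (valid since $|N|$ is odd) to get the factorization $G=N^{2}\cdot B$, deduce $|N^{2}|=|N|$ from the cardinality count, and contradict this via $1^{2}=a^{2}=1$. No differences worth noting.
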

\begin{proof} If $N$ is a direct factor of $G$, then there exists a sunset $A$ of $G$ such that $N\cdot A=G$ is a factorization. Since $|N|$ is odd, by Proposition 2.1, $N^2\cdot A $ is also a factorization of $G$, which implies that $|N^2|=|N|$. However, $1^2=a^2=1$ in $G$, it follows that $|N^2|\le|N|-1$, a contradiction. \end{proof}

{\bf Proof of Theorem 1.5:}
Recall that  Hickerson have  verified  the theorem  for all $[1, \, k]$ with  $k < 3000$. Moreover, Zhang and Ge \cite{[ZG]} solved the case $[-1, \,k]^\ast$ when $k=3, \,4, \,5$, $6, \,8$, $9, \,10$ and $[-2, \,k]^\ast$ when $k=3, \,4, \,6$. So we need only consider the case with $k_1>0$.
By Theorem 1.4 and Lemma 2.5, to prove Theorem 1.5, it suffices to  show that $[-k_1, \, k_2]^*$ does not split the cyclic group $\mathbb{Z}_m$ with $\gcd(m, \,(k_1+k_2)(k_1+k_2+1))=1$,  $m$ has at least two distinct odd prime divisors and $m$ has no prime divisor greater than $k_2$. Now we prove the theorem case by case.

For $k_1+k_2\le6$, there is not any $m$ with the above property.

$\bullet$ $k_1+k_2=7$, $[-k_1, \,k_2]^*=[-2, \,5]^*$. For this case, $m=3^\alpha5^\beta, \, \alpha, \, \beta\in\mathbb{N}$. By Lemma \ref{BjiaoM}, $\mathbb{Z}_n'=B\cdot M$, where $M=\{-2, \, -1, \, 1, \, 2,\, 4\}$, is a factorization. Since $|M|=5$ is odd, $(-1)^2=1$ and $|\mathbb{Z}_n'|=\varphi(m)$ is even, so it is impossible by Proposition \ref{po1}. Thus there does not exist any purely singular perfect $B[-2, \,5](m)$ set except for $m=1, 8$.

$\bullet$ $k_1+k_2=8$, $[-k_1, \,k_2]^*=[-1, \,7]^*$. For this case, $m=5^\alpha7^\beta, \, \alpha, \, \beta\in\mathbb{N}$. Put
$$B_0=\{x\in B, \gcd(x, \,m)=1\}, \quad B_5=\{x\in B, \gcd(x, \,m)=5\}, \quad B_7=\{x\in B, \gcd(x, \,m)=7\}.$$
By calculating the number of the elements $x$ of $\mathbb{Z}_m$ with $\gcd(x, \,m)=1$, $\gcd(x, \,m)=5$ and $\gcd(x, \,m)=7$, respectively, we get
$$ 6|B_0|=\varphi(m)=4\cdot6\cdot5^{\alpha-1}7^{\beta-1},$$
so $|B_0|=4\cdot5^{\alpha-1}7^{\beta-1}$. If $\alpha>1$, then we have
$$6|B_5|+|B_0|=\varphi(\frac{m}{5})=4\cdot6\cdot5^{\alpha-2}7^{\beta-1},$$
which implies that $3||B_0|$, a contradiction. If $\beta>1$, then we have
$$6|B_7|+|B_0|=\varphi(\frac{m}{7})=4\cdot6\cdot5^{\alpha-1}7^{\beta-2},$$
which also implies that $3||B_0|$, again a contradiction.  Hence $m=35\not\equiv1\pmod{8}$, which is impossible. Thus there does not exist any purely singular perfect $B[-1, \,7](m)$ set except for $m=1, 9$.

$\bullet$ $k_1+k_2=9$. Then $\gcd(m, \, 30)=1$, and there are no such $m$ satisfies the required properties.
Hence, there does not exist any purely singular perfect $B[-k_1, \,k_2](m)$ set except for $m=1, 10$.

$\bullet$ $k_1+k_2=10$, $[-k_1, \,k_2]^*=[-2, \,8]^*$. For this case, $m=3^\alpha7^\beta, \, \alpha, \, \beta\in\mathbb{N}$. By Lemma \ref{BjiaoM}, $\mathbb{Z}_n'=B\cdot M$, where $M=\{-2, \, -1, \, 1, \, 2,\, 4, \, 5, \,8\}$, is a factorization. Since $|M|=7$ is odd, $(-1)^2=1$ and $|\mathbb{Z}_n'|=\varphi(m)$ is even, so it is impossible by Proposition \ref{po1}. Thus there does not exist any purely singular perfect $B[-2, \,8](m)$ set except for $m=1$.

$\bullet$ $k_1+k_2=10$, $[-k_1, \,k_2]^*=[-3, \,7]^*$. For this case, $m=3^\alpha7^\beta, \, \alpha, \, \beta\in\mathbb{N}$. Put
$$B_0=\{x\in B, \gcd(x, \,m)=1\}, \quad B_3=\{x\in B, \gcd(x, \,m)=3\}, \quad B_7=\{x\in B, \gcd(x, \,m)=7\}.$$
By calculating the number of the elements $x$ of $\mathbb{Z}_m$ with $\gcd(x, m)=1$, $\gcd(x, \,m)=3$ and $\gcd(x, \,m)=7$, respectively, we get
$$ 6|B_0|=\varphi(m)=2\cdot6\cdot3^{\alpha-1}7^{\beta-1},$$
so $|B_0|=2\cdot3^{\alpha-1}7^{\beta-1}$. If $\alpha>1$, then we have
$$6|B_5|+3|B_0|=\varphi(\frac{m}{3})=2\cdot6\cdot3^{\alpha-2}7^{\beta-1}<3|B_0|,$$
a contradiction. If $\beta>1$, then we have
$$6|B_7|+|B_0|=\varphi(\frac{m}{7})=2\cdot6\cdot3^{\alpha-1}7^{\beta-2}<|B_0|,$$
 again a contradiction.  Hence $m=21\not\equiv1\pmod{8}$, which is impossible. Thus there does not exist any purely singular perfect $B[-3, \,7](m)$ set except for $m=1$.

$\bullet$ $k_1+k_2=11$, $[-k_1, \,k_2]^*=[-2, \,9]^*$. For this case, $m=5^\alpha7^\beta, \, \alpha, \, \beta\in\mathbb{N}$. By Lemma \ref{BjiaoM}, $\mathbb{Z}_n'=B\cdot M$, where $M=\{-2, \, -1, \, 1, \, 2, \,3, \, 4, \, 6, \,8, \,9\}$, is a factorization. Since $|M|=9$ is odd, $(-1)^2=1$ and $|\mathbb{Z}_n'|=\varphi(m)$ is even, so it is impossible by Proposition \ref{po1}. Thus there does not exist any purely singular perfect $B[-2, \,9](m)$ set except for $m=1, 12$.

$\bullet$ $k_1+k_2=11$, $[-k_1, \,k_2]^*=[-3, \,8]^*$ or $[-4, \,7]^*$. The argument is the same as the proof of the case $[-k_1, \,k_2]^*=[-2, \,9]^*$.

$\bullet$ $k_1+k_2=12$, $[-k_1, \,k_2]^*=[-1, \,11]^*$. For this case, $m=5^\alpha7^\beta11^\gamma, \, \alpha, \, \beta, \gamma\in\mathbb{N}$ or $m=5^\alpha11^\gamma, \, \alpha, \, \gamma\in\mathbb{N}$ or $m=7^\beta11^\gamma, \,  \beta, \, \gamma\in\mathbb{N}$ or $m=5^\alpha7^\beta, \, \alpha, \, \beta\in\mathbb{N}$. If $m=5^\alpha7^\beta11^\gamma, \, \alpha, \, \beta, \gamma\in\mathbb{N}$, let $S_{11}=\{x\in B, \, \gcd(x, \,11)=1\}$, by calculating the number of the elements $x$ of $\mathbb{Z}_m$ with $\gcd(x, 11)=1$, we obtain $11S_{11}= 10\cdot 5^\alpha7^\beta11^{\gamma-1}$, which implies that $\gamma>1$. Put
$$B_0=\{x\in B, gcd(x, \,m)=1\}, \quad B_{11}=\{x\in B, gcd(x, \,m)=11\}.$$
By calculating the number of the elements $x$ of $\mathbb{Z}_m$ with $\gcd(x, m)=1$ and $\gcd(x, \,m)=11$, respectively, we get
 $$8|B_0|=240\cdot5^{\alpha-1}7^{\beta-1}11^{\gamma-1},$$
 so $|B_0|=30\cdot5^{\alpha-1}7^{\beta-1}11^{\gamma-1}$. Since $\gamma>1$, then we have
 $$|B_0|+8|B_{11}|=\varphi(\frac{m}{11})=240\cdot5^{\alpha-1}7^{\beta-1}11^{\gamma-2},$$ therefore $8||B_0|=30\cdot5^{\alpha-1}7^{\beta-1}11^{\gamma-1}$, a contradiction. If $m=5^\alpha11^\gamma, \, \alpha, \, \gamma\in\mathbb{N}$ or $m=5^\alpha7^\beta, \, \alpha, \, \beta\in\mathbb{N}$, let
 $$B_0=\{x\in B, gcd(x, m)=1\}.$$
 Calculating the number of the elements $x$ of $\mathbb{Z}_m$ with $\gcd(x, m)=1$, we get $9|B_0|=40\cdot5^{\alpha-1}11^{\gamma-1}$ or $9|B_0|=24\cdot5^{\alpha-1}7^{\beta-1}$, which is impossible. If $m=7^\beta11^\gamma, \,  \beta, \, \gamma\in\mathbb{N}$, let $S_{11}=\{x\in B, \, \gcd(x, \,11)=1\}$, by calculating the number of the elements $x$ of $\mathbb{Z}_m$ with $\gcd(x, 11)=1$, we obtain $11S_{11}= 10\cdot 7^\beta11^{\gamma-1}$, which implies that $\gamma>1$. Put
$$B_0=\{x\in B, gcd(x, \,m)=1\}, \quad B_{11}=\{x\in B, gcd(x, \,m)=11\}.$$
By calculating the number of the elements $x$ of $\mathbb{Z}_m$ with $\gcd(x, m)=1$ and $\gcd(x, \,m)=11$, respectively, we get
 $$10|B_0|=60\cdot7^{\beta-1}11^{\gamma-1},$$
 so $|B_0|=6\cdot7^{\beta-1}11^{\gamma-1}$. Since $\gamma>1$, then we have
 $$|B_0|+10|B_{11}|=\varphi(\frac{m}{11})=60\cdot5^{\alpha-1}7^{\beta-1}11^{\gamma-2}<|B_0|,$$  a contradiction. Thus there does not exist any purely singular perfect $B[-1, \,11](m)$ set except for $m=1$.

$\bullet$ $k_1+k_2=12$, $[-k_1, \,k_2]^*=[-2, \,10]^*$, $[-3, 9]^\ast$, $[-4, \,8]^*$ or $[-5, \,7]^\ast$. For these cases, $m=5^\alpha7^\beta, \, \alpha, \, \beta\in\mathbb{N}$. Put
$S_7=\{x\in B, \, \gcd(x, \,7)=1\}$, by calculating the number of the elements $x$ of $\mathbb{Z}_m$ with $\gcd(x, \,11)=1$, we obtain $11S_7= 6\cdot 5^\alpha7^{\beta-1}$, which is impossible.
Thus there does not exist any purely singular perfect $B[-k_1, \,k_2](m)$ set except for $m=1$.

$\bullet$ $k_1+k_2=13$, $[-k_1, \,k_2]^*=[-1, \,12]^*$. For this case, $m=3^\alpha5^\beta11^\gamma, \, \alpha, \, \beta, \gamma\in\mathbb{N}$ or $m=3^\alpha11^\gamma, \, \alpha, \, \gamma\in\mathbb{N}$ or $m=5^\beta11^\gamma, \,  \beta, \, \gamma\in\mathbb{N}$ or $m=3^\alpha5^\beta, \, \alpha, \, \beta\in\mathbb{N}$. If $m=3^\alpha5^\beta11^\gamma, \, \alpha, \, \beta, \gamma\in\mathbb{N}$ or $m=3^\alpha11^\gamma, \, \alpha, \, \gamma\in\mathbb{N}$ or $m=5^\beta11^\gamma, \,  \beta, \, \gamma\in\mathbb{N}$, let $S_{11}=\{x\in B, \, \gcd(x, 11)=1\}$, by calculating the number of the elements $x$ of $\mathbb{Z}_m$ with $\gcd(x, 11)=1$, we obtain $12S_{11}= 10\cdot 3^\alpha5^\beta11^{\gamma-1}$ or $12S_{11}= 10\cdot 3^\alpha11^{\gamma-1}$ or $12S_{11}= 10\cdot5^\beta11^{\gamma-1}$, which is impossible. If $m=3^\alpha5^\beta, \, \alpha, \, \beta\in\mathbb{N}$, let $B_0=\{x\in B, gcd(x, m)=1\}$, then we calculate the number of the elements $x$ of $\mathbb{Z}_m$ with $\gcd(x, m)=1$, we obtain $7S_0= 8\cdot 3^{\alpha-1}5^{\beta-1}$, which is impossible. Thus there does not exist any purely singular perfect $B[-1, \,12](m)$ sets except for $m=1, 14$.

$\bullet$ $k_1+k_2=13$, $[-k_1, \,k_2]^*=[-2, \,11]^*$. For this case, $m=3^\alpha5^\beta11^\gamma, \, \alpha, \, \beta, \gamma\in\mathbb{N}$ or $m=3^\alpha11^\gamma, \, \alpha, \, \gamma\in\mathbb{N}$ or $m=5^\beta11^\gamma, \,  \beta, \, \gamma\in\mathbb{N}$ or $m=3^\alpha5^\beta, \, \alpha, \, \beta\in\mathbb{N}$. If $m=3^\alpha5^\beta11^\gamma, \, \alpha, \, \beta, \gamma\in\mathbb{N}$, let $B_0=\{x\in B, gcd(x,   \,m)=1\}$, then we calculate the number of the elements $x$ of $\mathbb{Z}_m$ with $\gcd(x, m)=1$, we obtain $7|B_0|= 80\cdot 3^{\alpha-1}5^{\beta-1}11^{\gamma-1}$, which is impossible. If $m=3^\alpha11^\gamma, \, \alpha, \, \gamma\in\mathbb{N}$ or $m=5^\beta11^\gamma, \,  \beta, \, \gamma\in\mathbb{N}$, let $S_{11}=\{x\in B, \, \gcd(x, 11)=1\}$, by calculating the number of the elements $x$ of $\mathbb{Z}_m$ with $\gcd(x, 11)=1$, we obtain $12S_{11}= 10\cdot 3^\alpha11^{\gamma-1}$ or  $12S_{11}= 10\cdot5^\beta11^{\gamma-1}$, which is impossible. If $m=3^\alpha5^\beta, \, \alpha, \, \beta\in\mathbb{N}$, let $S_5=\{x\in B, \gcd(x, 5)=1\}$, then we calculate the number of the elements $x$ of $\mathbb{Z}_m$ with $\gcd(x, 5)=1$, we obtain $11|S_5|= 4\cdot 3^\alpha5^{\beta-1}$, which is impossible.
Thus there does not exist any purely singular perfect $B[-2, \,11](m)$ sets except for $m=1, 14$.

$\bullet$ $k_1+k_2=13$, $[-k_1, \,k_2]^*=[-3, \,10]^*$. For this case, $m=3^\alpha5^\beta, \, \alpha, \, \beta\in\mathbb{N}$, let $B_0=\{x\in B, gcd(x, m)=1\}$, then we calculate the number of the elements $x$ of $\mathbb{Z}_m$ with $\gcd(x, m)=1$, we obtain $7S_0= 8\cdot 3^{\alpha-1}5^{\beta-1}$, which is impossible.
Thus there does not exist any purely singular perfect $B[-3, \,10](m)$ sets except for $m=1, 14$.

$\bullet$ $k_1+k_2=13$, $[-k_1, \,k_2]^*=[-4, \,9]^*$ or $[-5, 8]^\ast$. For these two cases, $m=3^\alpha5^\beta$,  $\alpha$,  $\beta\in\mathbb{N}$.  Put
$$B_0=\{x\in B, gcd(x, m)=1\}, \quad B_3=\{x\in B, gcd(x, m)=3\}, \quad B_5=\{x\in B, gcd(x, m)=5\}.$$
By calculating the number of the elements $x$ of $\mathbb{Z}_m$ with $\gcd(x, m)=1$, $\gcd(x, m)=3$ and $\gcd(x, m)=5$, respectively, we get
$$ 8|B_0|=\varphi(m)=8\cdot3^{\alpha-1}5^{\beta-1},$$
so $|B_0|=3^{\alpha-1}5^{\beta-1}$. If $\alpha>1$, then we have
$$8|B_3|+3|B_0|=\varphi(\frac{m}{3})=8\cdot3^{\alpha-2}5^{\beta-1}<3|B_0|,$$
a contradiction. If $\beta>1$, then we have
$$8|B_5|+\sigma|B_0|=\varphi(\frac{m}{5})=8\cdot3^{\alpha-1}5^{\beta-2},$$
where $\sigma=1$ if $[-k_1, \,k_2]^*=[-4, \,9]^*;$ $\sigma=2$ if $[-k_1, \,k_2]^*=[-5, 8]^\ast$.
This implies that $2||B_0|$, a contradiction.
Hence $m=15\not\equiv1\pmod{13}$, which is impossible. Thus there does not exist any purely singular perfect $B[-4, \,9](m)$ or $B[-5, 8](m)$ sets except for $m=1, 14$.

 $\bullet$ $k_1+k_2=13$, $[-k_1, \,k_2]^*=[-6, \,7]^*$. For this case, $m=3^\alpha5^\beta, \, \alpha, \, \beta\in\mathbb{N}$, let $B_0=\{x\in B, gcd(x, m)=1\}$, then we calculate the number of the elements $x$ of $\mathbb{Z}_m$ with $\gcd(x, m)=1$, we obtain $7S_0= 8\cdot 3^{\alpha-1}5^{\beta-1}$, which is impossible.
Thus there does not exist any purely singular perfect $B[-6, \,7](m)$ sets except for $m=1, 14$.

 $\bullet$ $k_1+k_2=14$, $[-k_1, \,k_2]^*=[-1, \,13]^*$. For this case, $m=11^\alpha13^\beta, \, \alpha, \, \beta\in\mathbb{N}$,  put
$B_0=\{x\in B, gcd(x, m)=1\}, \quad B_{11}=\{x\in B, \gcd(x, \,m)=11\}, \quad B_{13}=\{x\in B, \gcd(x, m)=13\}$.
By calculating the number of the elements $x$ of $\mathbb{Z}_m$ with $\gcd(x, m)=1$, $\gcd(x, m)=11$ and $\gcd(x, m)=13$, respectively, we get
$$ 12|B_0|=\varphi(m)=120\cdot11^{\alpha-1}13^{\beta-1},$$
so $|B_0|=10\cdot11^{\alpha-1}13^{\beta-1}$. If $\alpha>1$, then we have
$$12|B_{11}|+|B_0|=\varphi(\frac{m}{11})=120\cdot11^{\alpha-2}13^{\beta-1},$$
which implies that $12||B_0|$, a contradiction. If $\beta>1$, then we have
$$12|B_{13}|+|B_0|=\varphi(\frac{m}{13})=120\cdot11^{\alpha-1}13^{\beta-2}<|B_0|,$$
 again a contradiction.  Hence $m=143\not\equiv1\pmod{14}$, which is impossible. Thus there does not exist any purely singular perfect $B[-1, \,13](m)$  sets except for $m=1, 15$.
This completes the proof of Theorem 1.5.

\qed

{\bf Remark:} It is easy to see that we can prove more by the method in the proof of Theorem 1.5, since there are many cases have to be discussed, we stop here.

Finally, by Theorem \ref{primeB}, we propose the following conjecture which implies both Conjecture \ref{conj} and  Conjecture \ref{conj1}.

\begin{conj} \label{conj2}
Let $k_1, k_2$ be integers with $0 \le k_1 \leq k_2$ and
$k_1 + k_2\ge 4$.
If there exists a purely singular perfect $B[-k_1,k_2](m)$ set with the splitter set $S$,
then $\gcd(s, \,m)=1$ for all $s\in S$.
\end{conj}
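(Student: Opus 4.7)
The plan is to argue by contradiction: suppose some prime $p \mid m$ divides some splitter $s_0 \in S$, and derive a structural impossibility from the purely singular hypothesis (which guarantees an $i_0 \in M$ with $p \mid i_0$) together with the fine arithmetic of the splitting. The prime-power case $m = p^{\alpha}$ is already known (Lemma \ref{p-group-sp} for odd $p$, the authors' prior work \cite{[PK]} for $p=2$), and if $m/\gcd(s_0,m) \le k_2$ then any $i \in M$ divisible by $m/\gcd(s_0,m)$ yields $i s_0 = 0$ in $\mathbb{Z}_m$, contradicting the splitting; so the interesting regime is when $m$ has prime-power factors large enough that this easy obstruction fails.

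For each prime $p \mid m$ I would next carry out the counting of $\{g \in \mathbb{Z}_m\setminus\{0\}: p \mid g\}$ via the splitting. Setting $M_{(p)} = \{i \in M : p \mid i\}$ and $S_{(p)} = \{s \in S : p \mid s\}$, the pairs $(i,s)$ contributing to this set are exactly those with $p \mid i$ or $p \mid s$, so
\[
|S_{(p)}|\,(k_1+k_2) + (|S| - |S_{(p)}|)\,|M_{(p)}| = m/p - 1,
\]
which rearranges to $|S_{(p)}|\bigl(k_1+k_2 - |M_{(p)}|\bigr) = (m/p - 1) - |S|\,|M_{(p)}|$. Iterating this identity for every prime-power divisor of $m$ and combining it with Lemmas \ref{gen-plus=n-1}--\ref{gen-pk-lem} on how $k_1+k_2+1$ must divide $m$ should severely constrain the shape of $m$ whenever some $|S_{(p)}|>0$. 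An equivalent global formulation uses the cyclotomic identity $M(\zeta_d)S(\zeta_d)=-1$ for every $d\mid m$, $d>1$, where $M(x)=\sum_{i\in M}x^i$ and $S(x)=\sum_{s\in S}x^s$ modulo $x^m-1$, combined with Proposition 2.1 (replacing $S$ by $S^k$ for $\gcd(k,|S|)=1$), to try to force $S_{(p)}=\emptyset$.

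The main obstacle, and the reason Conjecture \ref{conj2} is stated as an open problem rather than a theorem, is the globalization step: projecting a splitting of $\mathbb{Z}_m$ onto a local factor $\mathbb{Z}_{p^{v_p(m)}}$ via the Chinese Remainder Theorem yields only a covering with multiplicities, not a genuine splitting, so Lemma \ref{p-group-sp} does not apply verbatim, while the cyclotomic identities constrain $M(\zeta_d)$ and $S(\zeta_d)$ as elements of $\mathbb{Z}[\zeta_d]$ without directly pinning down their supports. Overcoming this seems to require either a covering analogue of the Siebert--Schwarz dichotomy, or a combinatorial argument exploiting the interval form of $M=[-k_1,k_2]^{*}$ to exclude the divisibility patterns left open by the counting identities above. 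Short of such a tool, the fallback plan is an inductive finite case analysis of $m$ organized by the number and multiplicities of its prime divisors (squarefree with small primes, then $p^{\alpha}q$, then $p^{\alpha}q^{\beta}$, and so on), extending the strategy of Theorem \ref{finally-thm} one prime-power at a time until Theorem \ref{primeB} can be invoked.
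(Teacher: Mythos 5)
The statement you were asked to prove is Conjecture \ref{conj2}, which the paper itself leaves \emph{open}: it is proposed at the end of Section 5 precisely because the authors cannot prove it, and the paper only establishes special cases pointing toward it (Lemma \ref{p-group-sp} and Corollary \ref{cyclic-p-gr} for $m=p_0^{\alpha}$ or $pq$, Theorem \ref{primeB} under the hypothesis $\gcd(s,m)=1$, Theorem \ref{finally-thm}, and the unlabeled final theorem's case analysis for $4\le k_1+k_2\le 14$). So there is no proof in the paper to compare against, and your proposal, which explicitly stops short of a proof and identifies the globalization step as the obstruction, is consistent with that status. Your concrete partial observations are essentially sound: the counting identity $|S_{(p)}|\,|M|+(|S|-|S_{(p)}|)\,|M_{(p)}|=m/p-1$ is correct (it is exactly the kind of divisor-counting the paper runs in the proof of Theorem \ref{primeB} and in the $k_1+k_2\le14$ analysis), as is the remark that an $i\in M$ divisible by $m/\gcd(s_0,m)\le k_2$ would force $is_0=0$.

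One step in your toolkit, however, is wrong as written and would fail if leaned on: the ``cyclotomic identity'' $M(\zeta_d)S(\zeta_d)=-1$. The product of mask polynomials $M(x)S(x)$ modulo $x^m-1$ has exponents $i+s$, so vanishing-type identities of that shape characterize \emph{additive} factorizations $\mathbb{Z}_m\setminus\{0\}=M+S$; the splitting here is multiplicative, $g=i\cdot s$, and the correct character statement is only $\sum_{s\in S}M(\zeta^{s})=-1$ for each nontrivial $m$-th root of unity $\zeta$, which does not factor and is far weaker. Relatedly, Proposition \ref{po1} (via Proposition 2.1) applies to genuine factorizations; in the paper it is invoked only for the unit group through Lemma \ref{BjiaoM}, not for $S$ inside $\mathbb{Z}_m$, so ``replacing $S$ by $S^k$'' is not available at the level of the full splitting. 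Finally, your fallback of a case analysis organized by the prime factorization of $m$ can by its nature only reproduce finite partial results of the type already recorded in the paper, not the conjecture itself. In short: there is no gap \emph{relative to the paper}, since the paper proves nothing here, but your sketch should be read as a survey of known reductions (with the cyclotomic step corrected) rather than as progress toward a proof.
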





\end{document}